\newcommand{\R}{\ensuremath{\mathbb{R}}}
\DeclareMathOperator{\ee}{e}
\newtheorem {theorem} {Theorem}
\newtheorem {definition}[theorem] {Definition}
\newtheorem {proposition}[theorem] {Proposition}
\newtheorem {corollary}[theorem] {Corollary}
\newtheorem {lemma}[theorem] {Lemma}
\newtheorem {remark}[theorem] {Remark}
\title[Weak-Coppel problem for a class of Riccati differential equations]{Weak-Coppel problem for a class of\\ Riccati differential equations}
\author[A. Gasull]{Armengol Gasull}
\address{Departament de Matem\`{a}tiques, Universitat Aut\`{o}noma de Barcelona, 08193 Be\-lla\-ter\-ra, Barcelona (Spain)}
\email{armengol.gasull@uab.cat}
\author[D. D. Novaes]{Douglas D. Novaes}
\address{Departamento de Matem\'{a}tica, Instituto de Matem\'{a}tica, Estatística e Computa\c{c}\~{a}o Cient\'{i}fica (IMECC), Universidade
Estadual de Campinas (UNICAMP), Rua S\'{e}rgio Buarque de Holanda, 651, Cidade Universit\'{a}ria Zeferino Vaz, 13083--859, Campinas, SP, Brazil.} \email{ddnovaes@unicamp.br} 
\author[J. Torregrosa]{Joan Torregrosa$^*$}
\address{Departament de Matem\`{a}tiques, Universitat Aut\`{o}noma de Barcelona, 08193 Be\-lla\-ter\-ra, Barcelona (Spain); Centre de Recerca Matem\`{a}tica, Campus de Be\-lla\-ter\-ra, 08193 Bellaterra, Barcelona (Spain)}
\email{joan.torregrosa@uab.cat}
\thanks{$^*$ Corresponding author.}
\subjclass[2020]{Primary 34C25; Secondary 34C07; 34C23}
\keywords{Periodic orbits, limit cycles, Riccati differential equations, bifurcation}
\begin{document}

\begin{abstract}
We study the $T$-periodic solutions of the real Riccati differential equation $x' = x^2 + \gamma(t),$
where $x=x(t)$ and $\gamma$ is a $T$-periodic function. Our goal is to define a real-valued discriminant $\Delta_{\gamma}$ that determines whether the equation admits two, one, or no $T$-periodic solutions, in analogy with the classical discriminant of the quadratic algebraic equations. This problem is closely related to a question posed by Coppel concerning the characterization of bifurcation curves for planar quadratic differential systems. Although our result does not cover all periodic Riccati differential equations, many of them can be transformed  into this particular form.
\end{abstract}

\maketitle

\section{Introduction}

The existence of $T$-periodic solutions for first-order linear differential equations with $T$-periodic coefficients,
\[
x' = a_1(t)x + a_0(t),
\]
where $a_0$ and $a_1$ are smooth real-valued $T$-periodic functions, is a classical and well-understood problem. It is closely connected to Floquet theory, and it is well known that such equations admit either a unique hyperbolic $T$-periodic solution or a continuum of $T$-periodic solutions; see, for instance, \cite{Inc1944} or any basic text on differential equations. Moreover, it is easy  to elucidate  in terms of $a_0$ and $a_1$ which is the situation.

The problem becomes more difficult  in the case of Riccati differential equations, which take the general form
\begin{equation}\label{riccati}
	x' = a_2(t)x^2 + a_1(t)x + a_0(t),
\end{equation}
with all coefficients $a_i(t)$ again assumed to be smooth real-valued $T$-periodic functions. It is also well known that equation \eqref{riccati} can have either a continuum of $T$-periodic solutions or at most two   $T$-periodic solutions, see~\cite{Hil,Inc1944} and the survey \cite{Gas}. In the latter case, the possibilities are restricted to the following: two hyperbolic periodic solutions, a single periodic  solution (either hyperbolic or double), or no periodic solutions. This classification can be established, for instance, by noting that if $x=\phi(t)$ is a periodic solution, then the Riccati equation can be reduced into a linear equation explicitly depending on $\phi(t)$ through the change of variables $y = 1/(x-\phi(t))$. The classification then follows from the corresponding results for the linear case.

For brevity, and by analogy with the planar autonomous case, isolated periodic solutions of a $T$-periodic differential equation $x’(t)=f(t,x)$ will also be referred to as {\it limit cycles}. In addition, a limit cycle $x=\phi(t)$ is said to be hyperbolic if the corresponding zero of the displacement map (i.e., the Poincaré map at time $T$ minus the identity) is simple. This condition can be expressed as
\begin{equation}\label{eq:hip}
h(\phi)=\int_0^T \frac{\partial f}{\partial x}(t,\phi(t)) \, {\rm d} t \ne 0,
\end{equation}
see~\cite{Llo}. Furthermore, if $h(\phi)<0$ (resp. $h(\phi)>0$), the limit cycle is attractive (resp. repulsive). Similarly, for $f$ smooth enough, a limit cycle is said to have multiplicity $n$ if it corresponds to a zero of multiplicity $n$ of the displacement map. In particular, when $n=2$, it is called a {\it double} limit cycle, in which case it is semi-stable.

 The goal of this paper is to provide a mechanism for precisely determining the number of $T$-periodic solutions within certain subfamilies of~\eqref{riccati}. For more general properties of Riccati equations, we refer the reader to \cite{Gas,Hil,Red1972} and the references therein. In particular, several of these papers relate Riccati equations with the celebrated Hill's equation and also study their extensions to the complex and even to the quaternions.

More specifically, we will focus on the real Riccati equation
\begin{equation}\label{eq:firstnf}
	x'= x^2 + \gamma(t),
\end{equation}
where $\gamma(t)$ is a given smooth real-valued $T$-periodic function. Our objective is to introduce a real-valued discriminant function $\Delta_{\gamma}$, in analogy with the classical discriminant $\Delta = b^2 - 4ac$ associated with quadratic equations $ax^2 + bx + c = 0$. The sign of $\Delta_{\gamma}$ will determine the number of $T$-periodic solutions of \eqref{eq:firstnf}, namely: two periodic solutions when $\Delta_{\gamma} > 0$, one (double) when $\Delta_{\gamma} = 0$, and none when $\Delta_{\gamma} < 0$.

This problem is related to the one proposed by Coppel in \cite{Coppel66} for planar quadratic systems, intended to characterize their phase portraits through algebraic inequalities on the coefficients. However, as shown in \cite{DF91}, such a characterization cannot, in general, be achieved using analytic, much less algebraic, inequalities on the coefficients. This has led to weaker formulations of Coppel's problem that relax the regularity conditions on the bifurcation curves. For instance, in \cite{GGPT,GGT10}, the authors attempted to approximate bifurcation curves by sandwiching them between algebraic curves. In our case, we refer to the Weak-Coppel Problem, since, as expected, our discriminant $\Delta_{\gamma}$ does not appear to be algebraic in $\gamma$.

It is worth mentioning that equation~\eqref{eq:firstnf} encompasses a broader class of Riccati differential equations inside~\eqref{riccati}, namely those for which $a_2(t) \neq 0$ for all $t \in [0,T]$ (see Corollary~\ref{coro}) as well as those introduced in Section~\ref{sec:mes}.

In order to establish the discriminant $\Delta_{\gamma}$, we first introduce the average and the mean-centered function of $\gamma$, given by \begin{equation}\label{average}
\overline{\gamma} := \dfrac{1}{T}\int_0^T \gamma(t) \, {\rm d} t\qquad\mbox{and}\qquad \widehat{\gamma}(t) := \gamma(t) - \overline{\gamma},
\end{equation}
respectively. Next, for each $\gamma\in \mathcal{C}^1_T(\R)$, we define the functionals $\underline{\mu}[\gamma]: \mathcal{C}^1_T(\mathbb{R}) \to \mathbb{R}$ and $\overline{\mu}[\gamma]: \mathcal{C}^1_{T,0}(\mathbb{R}) \to \mathbb{R}$ by
\begin{equation}\label{def:func}
\begin{aligned}
\underline{\mu}[\gamma](p):=&-\underset{t\in[0,T]}{\max}\left\{p(t)^2+\widehat{\gamma}(t)-p'(t)\right\},\\
\overline{\mu}[\gamma](p):=&\left(\displaystyle\int_0^T \ee^{-2\int_0^t p(s)\, {\rm d} s} \, {\rm d} t\right)^{-1}\int_0^T \ee^{-2\int_0^t p(s)\, {\rm d} s}\big(p(t)^2-\widehat{\gamma}(t)\big)\, {\rm d} t.
\end{aligned}
\end{equation}
For $k\in\{0,1\},$ here $\mathcal{C}^k_T(\R)$ denotes the space of $T$-periodic $\mathcal{C}^k$ real functions and $\mathcal{C}^k_{T,0}(\R)\subset \mathcal{C}^k_T(\R)$ denotes the subspace of functions with zero average. For the sake of conciseness, we will denote $\underline{\mu}[\gamma]$ and $\overline{\mu}[\gamma]$ simply as $\underline{\mu}$ and $\overline{\mu}$ when the context is clear.

The following result holds for the above defined functionals. Its proof is a consequence of Proposition~\ref{continuity} and Theorem~\ref{mainaux}.

\begin{proposition}\label{prop:functionals}
Let $\gamma(t)$ be a $T$-periodic function. Then, the functionals $\underline{\mu}[\gamma]$ and $\overline{\mu}[\gamma]$ are continuous and satisfy $\underline{\mu}[\gamma](p)\leq\overline{\mu}[\gamma](q)\leq0$ for any $p\in \mathcal{C}^1_{T}(\R)$ and $q\in \mathcal{C}^1_{T,0}(\R)$. Moreover, there exists $p^*\in \mathcal{C}^1_{T,0}(\R)$ such that 
\begin{equation*}
	\begin{aligned}
\gamma^*:&=\max \left\{\underline{\mu}[\gamma](p):\, p\in  \mathcal{C}^1_{T}(\R)\right\}=\underline{\mu}[\gamma](p^*)\\
&=\min \left\{\overline{\mu}[\gamma](p): p\in  \mathcal{C}^1_{T,0}(\R)\right\}=\overline{\mu}[\gamma](p^*),
	\end{aligned}
\end{equation*}
and $\gamma^*=0$ if and only if $\widehat\gamma(t)\equiv 0$.
\end{proposition}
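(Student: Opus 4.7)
The plan is to split the proposition into three stages: (i) continuity of the two functionals, (ii) the weak-duality inequality $\underline{\mu}[\gamma](p)\le\overline{\mu}[\gamma](q)$, and (iii) the strong-duality equality $\gamma^*=\underline{\mu}[\gamma](p^*)=\overline{\mu}[\gamma](p^*)$ together with the characterization $\gamma^*=0\Leftrightarrow\widehat{\gamma}\equiv 0$. Continuity will be quoted directly from Proposition~\ref{continuity} ($\underline{\mu}[\gamma]$ as a negative supremum of a continuous family in $\mathcal{C}^0([0,T])$, and $\overline{\mu}[\gamma]$ as a ratio of continuous integral operators acting on $p$). The strong-duality equality and the existence of the joint optimizer $p^*\in \mathcal{C}^1_{T,0}(\R)$ are the content of Theorem~\ref{mainaux}; only the inequality and the characterization will be done from scratch here.

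For the key inequality I fix $p\in\mathcal{C}^1_T(\R)$ and $q\in\mathcal{C}^1_{T,0}(\R)$ and introduce the positive weight $w(t):=\ee^{-2\int_0^t q(s)\,ds}$. Because $\int_0^T q=0$, the function $w$ is itself $T$-periodic and of class $\mathcal{C}^1$, and it satisfies $w'=-2qw$. Since $wp$ is then also $T$-periodic, the vanishing of $\int_0^T(wp)'\,dt$ rearranges to $\int_0^T w p'\,dt=2\int_0^T wqp\,dt$. Plugging this into $\int_0^T w(p^2+\widehat{\gamma}-p')\,dt$ and completing the square gives the identity
\[
\int_0^T w\bigl(p^2+\widehat{\gamma}-p'\bigr)\,dt=\int_0^T w(p-q)^2\,dt-\int_0^T w(q^2-\widehat{\gamma})\,dt.
\]
Bounding the left-hand side above by $-\underline{\mu}[\gamma](p)\int_0^T w\,dt$, via the pointwise definition of $\underline{\mu}[\gamma]$, and recognizing the last integral as $\overline{\mu}[\gamma](q)\int_0^T w\,dt$ yields
\[
\bigl(\overline{\mu}[\gamma](q)-\underline{\mu}[\gamma](p)\bigr)\int_0^T w(t)\,dt\;\ge\;\int_0^T w(t)(p(t)-q(t))^2\,dt\;\ge\;0,
\]
which is the desired inequality. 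Specializing to $q\equiv 0$ gives $\overline{\mu}[\gamma](0)=-\overline{\widehat{\gamma}}=0$, so the common value $\gamma^*$ produced by Theorem~\ref{mainaux} satisfies $\gamma^*\le 0$.

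For the characterization, the "if" direction is immediate: if $\widehat{\gamma}\equiv 0$, inserting $p\equiv 0$ gives $\underline{\mu}[\gamma](0)=\overline{\mu}[\gamma](0)=0$ and hence $\gamma^*=0$. For the converse, suppose $\gamma^*=0$ and let $p^*\in\mathcal{C}^1_{T,0}(\R)$ be the optimizer from Theorem~\ref{mainaux}, so that $\underline{\mu}[\gamma](p^*)=0$. This forces $(p^*)^2+\widehat{\gamma}-(p^*)'\le 0$ pointwise, while its average over $[0,T]$ equals $\overline{(p^*)^2}\ge 0$ (using $\overline{\widehat{\gamma}}=0$ and the periodicity of $p^*$). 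A nonpositive function with nonnegative average must vanish identically, so $\overline{(p^*)^2}=0$ and in turn $p^*\equiv 0$, after which the identity reduces to $\widehat{\gamma}\equiv 0$.

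The main obstacle lies not in the weak inequality or the characterization --- both are short and essentially self-contained --- but in the strong-duality content encapsulated in Theorem~\ref{mainaux}. Producing a common maximizer/minimizer $p^*$ in $\mathcal{C}^1_{T,0}(\R)$ requires a genuine variational and compactness argument (for instance, upper semicontinuity and coercivity of $\underline{\mu}[\gamma]$ on an appropriate subspace, together with a compactness statement for maximizing sequences whose limit also realizes the minimum of $\overline{\mu}[\gamma]$), and this is where the substantive technical work of the paper is concentrated.
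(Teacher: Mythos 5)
Your proposal is correct, and for the parts you work out from scratch it takes a genuinely different route from the paper. The paper's own proof of Proposition~\ref{prop:functionals} is a one-line reduction: continuity comes from Proposition~\ref{continuity}, and the inequality, the existence of $p^*$, the max/min equalities, and the characterization of $\gamma^*=0$ are all read off from Theorem~\ref{mainaux} by setting $\gamma^*=\mu^*$. In particular, the paper obtains $\underline{\mu}[\gamma](p)\le\overline{\mu}[\gamma](q)$ only indirectly, by sandwiching the bifurcation value $\mu^*$ between the two functionals via dynamical arguments (invariant regions bounded by the curve $x=p(t)$ on one side, and the auxiliary linear equation \eqref{linear} having a center at $\mu=\overline{\mu}(q)$ on the other). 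Your weighted identity
\[
\int_0^T w\bigl(p^2+\widehat{\gamma}-p'\bigr)\,{\rm d} t=\int_0^T w\,(p-q)^2\,{\rm d} t-\int_0^T w\,\bigl(q^2-\widehat{\gamma}\bigr)\,{\rm d} t,\qquad w(t)=\ee^{-2\int_0^t q(s)\,{\rm d} s},
\]
proves the same inequality directly and purely analytically; it is in effect a distillation of the paper's geometric argument (the square $(q(t)-y(t,y_0))^2$ appearing in the proof of Theorem~\ref{mainaux} is the same completed square). Likewise, your proof of $\gamma^*=0\Leftrightarrow\widehat{\gamma}\equiv0$ (a nonpositive continuous function with nonnegative average vanishes identically) replaces the paper's appeal to Lemma~\ref{lem:mu}. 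What you do not gain is independence from Theorem~\ref{mainaux}: like the paper, you still rely on it for the existence of the common optimizer $p^*$ and the attainment of the max and min, and you correctly identify this as where the substantive work lies. One caveat, shared with the paper: neither your argument nor the paper's establishes $\overline{\mu}[\gamma](q)\le0$ for \emph{every} $q\in\mathcal{C}^1_{T,0}(\R)$ --- you only obtain $\overline{\mu}[\gamma](0)=0$ and hence $\gamma^*\le0$. In fact, for $\widehat{\gamma}\equiv0$ and $q=\sin$ one computes $\overline{\mu}[\gamma](q)>0$, so that clause of the statement seems to require the weaker reading $\gamma^*\le0$, which is exactly what your specialization to $q\equiv0$ delivers.
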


We are ready to introduce the discriminant of~\eqref{eq:firstnf}.

\begin{definition} Let $\gamma(t)$ be a $T$-periodic function. We define the discriminant $\Delta_{\gamma}$ of its associated Riccati differential equation~\eqref{eq:firstnf},  as 
\begin{equation*}
		\Delta_{\gamma}:=\gamma^*-\overline{\gamma},
	\end{equation*}
where $\gamma^*$ is introduced in Proposition~\ref{prop:functionals} and $\overline{\gamma}$ in~\eqref{average}.
\end{definition}

We can now state our main results.

\begin{theorem}\label{main}
Let $\gamma(t)$ be a $T$-periodic function. If $\Delta_{\gamma} > 0$, then the differential equation \eqref{eq:firstnf},
\[x' = x^2 + \gamma(t),\] 
has exactly two hyperbolic limit cycles; if $\Delta_{\gamma} = 0$, it has a unique semi-stable double limit cycle; and if $\Delta_{\gamma} < 0$, it has no limit cycles. Moreover,  
\begin{equation}\label{ineq}
\underline{\mu}[\gamma](p) - \overline{\gamma} \leq \Delta_{\gamma} \leq \overline{\mu}[\gamma](q) - \overline{\gamma}
\end{equation}
for every $p \in \mathcal{C}^1_T(\mathbb{R})$ and $q \in \mathcal{C}^1_{T,0}(\mathbb{R})$.
\end{theorem}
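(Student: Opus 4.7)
The plan is to embed~\eqref{eq:firstnf} into the one-parameter family $x'=x^2+\widehat{\gamma}(t)-\mu$, which coincides with~\eqref{eq:firstnf} when $\mu=-\overline{\gamma}$; under this identification $\Delta_{\gamma}=\gamma^*+\mu$, so the three cases of the theorem correspond respectively to $\mu>-\gamma^*$, $\mu=-\gamma^*$, and $\mu<-\gamma^*$. The sandwich inequality~\eqref{ineq} is merely Proposition~\ref{prop:functionals} after subtracting $\overline{\gamma}$, so the substance of the proof lies in counting periodic solutions of the above family.

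I would first record the dichotomy obtained from the classical linearization $y=1/(x-\phi_1)$ around a periodic solution $\phi_1$: the resulting affine equation $y'+2\phi_1 y=-1$ has a (necessarily unique) periodic solution exactly when $\int_0^T\phi_1\,{\rm d} t\neq 0$. Hence $x'=x^2+\widehat{\gamma}-\mu$ has either $0$, $1$, or $2$ periodic solutions; the value $1$ occurs precisely when the unique periodic solution has zero average, in which case $h(\phi)=2\int_0^T \phi\,{\rm d} t=0$ by~\eqref{eq:hip} and the displacement map, a Möbius quotient whose numerator is quadratic, is forced to have a zero of multiplicity two, giving a semi-stable double limit cycle. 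In the case of two periodic solutions, both are automatically hyperbolic.

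To locate the critical value of $\mu$, note that any periodic solution $\phi$ of $x'=x^2+\widehat{\gamma}-\mu$ satisfies $\phi^2+\widehat{\gamma}-\phi'\equiv\mu$, hence $\underline{\mu}[\gamma](\phi)=-\mu$, and Proposition~\ref{prop:functionals} forces $\mu\geq-\gamma^*$; in particular, no periodic solution exists when $\mu<-\gamma^*$. Conversely, pick $p^*\in\mathcal{C}^1_{T,0}(\R)$ as in Proposition~\ref{prop:functionals}; the equality $\underline{\mu}[\gamma](p^*)=\gamma^*$ rewrites as $(p^*)'\geq(p^*)^2+\widehat{\gamma}+\gamma^*$, so $p^*$ is a periodic super-solution of $x'=x^2+\widehat{\gamma}-\mu$ for every $\mu\geq-\gamma^*$. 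Any sufficiently large negative constant is a sub-solution lying below $p^*$, and the standard sub/super-solution theorem for periodic scalar ODEs furnishes a $T$-periodic solution for each such $\mu$.

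The heart of the proof is the boundary case $\mu=-\gamma^*$, where I would show that $p^*$ itself is this periodic solution. Since $p^*\in\mathcal{C}^1_{T,0}(\R)$, the weight $w(t):=\ee^{-2\int_0^t p^*(s)\,{\rm d} s}$ is $T$-periodic, and integration by parts yields $\int_0^T w\,(p^*)'\,{\rm d} t=2\int_0^T w\,(p^*)^2\,{\rm d} t$. Combining this identity with $\overline{\mu}[\gamma](p^*)=\gamma^*$ gives
\begin{equation*}
\int_0^T w(t)\,\bigl[(p^*)'(t)-(p^*)(t)^2-\widehat{\gamma}(t)-\gamma^*\bigr]\,{\rm d} t=0,
\end{equation*}
whose integrand is pointwise nonnegative by $\underline{\mu}[\gamma](p^*)=\gamma^*$; since $w>0$ it must vanish identically, so $p^*$ solves $x'=x^2+\widehat{\gamma}+\gamma^*$. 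The dichotomy above then identifies $p^*$ as the unique periodic solution at $\mu=-\gamma^*$, a semi-stable double limit cycle. For $\mu>-\gamma^*$ the same $p^*$ is a \emph{strict} super-solution, so existence persists; an analogous computation shows that any zero-average periodic solution $\phi$ would satisfy $\overline{\mu}[\gamma](\phi)=-\mu<\gamma^*$, contradicting Proposition~\ref{prop:functionals}, and the dichotomy then forces exactly two hyperbolic periodic solutions. Reverting to $\mu=-\overline{\gamma}$ recovers the three cases of Theorem~\ref{main}; the main obstacle is the weighted identity above, which is precisely what promotes the abstract supremum $\gamma^*$ into a bona fide periodic solution at the critical value.
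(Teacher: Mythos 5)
Your argument is circular at its load-bearing point. Everything in your proof hinges on Proposition~\ref{prop:functionals}, and specifically on the existence of a single $p^*\in\mathcal{C}^1_{T,0}(\R)$ at which $\underline{\mu}[\gamma]$ attains its maximum, $\overline{\mu}[\gamma]$ attains its minimum, and the two extremal values coincide. But in this paper Proposition~\ref{prop:functionals} is not an independent input: it is stated without proof in the introduction and then deduced, together with Theorem~\ref{main}, from the bifurcation result Theorem~\ref{mainaux}. Your own computation makes the circularity explicit: you show that the hypotheses $\underline{\mu}[\gamma](p^*)=\gamma^*=\overline{\mu}[\gamma](p^*)$ force $p^*$ to be a zero-average periodic solution of $x'=x^2+\widehat{\gamma}(t)+\gamma^*$, i.e.\ the double limit cycle at the critical parameter. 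So the attainment-plus-duality statement you import is \emph{equivalent} to the existence of the critical double cycle, which is precisely the crux of the theorem. Nothing in your proposal supplies this: the inequality $\underline{\mu}[\gamma](p)\leq\overline{\mu}[\gamma](q)$ can be obtained directly (a weighted integration by parts reduces it to $\int_0^T w_q\,(q-p)^2\,{\rm d} t\geq 0$), but the facts that $\sup\underline{\mu}=\inf\overline{\mu}$ and that both are attained by a common zero-average function require an argument, and you give none.

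The paper fills exactly this hole dynamically in the proof of Theorem~\ref{mainaux}: for $\mu$ sufficiently negative the line $x=0$ separates two hyperbolic cycles $\phi^{\pm}(t;\mu)$; a comparison (rotated--vector--field) argument shows these move monotonically toward each other as $\mu$ increases, so they coalesce at a unique $\mu^*$ into a double cycle $\phi^*$; only then does one verify $\underline{\mu}(\phi^*)=\overline{\mu}(\phi^*)=\mu^*$ (the latter by the same weighted integration by parts you use, run in the opposite direction). The pieces of your proposal that do not rely on the minimax statement are correct and close to the paper's: the linearization dichotomy reproduces Lemma~\ref{persol} and Corollary~\ref{conseq}; the observation that a periodic solution $\phi$ at parameter $\mu$ satisfies $\underline{\mu}(\phi)=-\mu$ and, if it has zero average, $\overline{\mu}(\phi)=-\mu$, correctly rules out periodic solutions on one side of the critical value and non-hyperbolic ones on the other; and the sub/super-solution existence argument matches the paper's invariant-region reasoning. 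To make your route self-contained you would need an independent (e.g.\ variational or compactness) proof of the existence of the common optimizer $p^*$, or else replace that step by the paper's monotone coalescence argument.
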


The proof of Theorem~\ref{main} will follow from Theorem~\ref{mainaux} in Section~\ref{sec:proof}. A straightforward corollary is:

\begin{corollary}\label{coro}
Consider the $T$-periodic Riccati equation~\eqref{riccati},
\begin{equation*}
	x' = a_2(t)x^2 + a_1(t)x + a_0(t),
\end{equation*}
 with $a_2(t)\ne0$ for all $t\in\R.$
Define the $T$-periodic function
\begin{equation}\label{change2}
	\gamma(t) = a_2(t) a_0(t) - A(t)^2 + A'(t),\quad\mbox{where}\quad A(t)= \frac{1}{2} \left(a_1(t) + \frac{a_2'(t)}{a_2(t)}\right).
\end{equation}
Then, by taking the corresponding  $\Delta_{\gamma}$, the same conclusion of Theorem~\ref{main} holds for the Riccati differential equation~\eqref{riccati}.
\end{corollary}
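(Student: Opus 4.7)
My plan is to reduce Corollary~\ref{coro} to Theorem~\ref{main} by an explicit time-dependent affine change of variables that turns the general Riccati equation~\eqref{riccati} into the normal form~\eqref{eq:firstnf}. The hypothesis $a_2(t)\neq 0$ is exactly what will make such a substitution invertible fibre-wise, and the $T$-periodicity of $a_2$ will guarantee that $T$-periodic solutions get mapped to $T$-periodic solutions.

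First, I would try the ansatz $y(t)=a_2(t)x(t)+\beta(t)$ with $\beta\in\mathcal{C}^1_T(\R)$ to be determined. Substituting~\eqref{riccati} into the expression for $y'$ yields a polynomial of degree two in $x$ whose leading coefficient $a_2(t)^2$ automatically matches the one coming from $y^2$. Imposing that the resulting equation for $y$ have the normal form $y'=y^2+\gamma(t)$ thus amounts to two scalar conditions: vanishing of the coefficient of $x$ and identification of the independent term. The first condition forces $\beta(t)$ to be precisely $A(t)$ from~\eqref{change2}; the second then delivers the prescribed expression for $\gamma(t)$.

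Next, I would check that the correspondence $x\leftrightarrow y$ preserves the relevant features. Because $a_2$ and $A$ are $T$-periodic and $a_2(t)\neq 0$ for every $t$, the map $x\mapsto a_2(t)x+A(t)$ is an affine bijection for each $t$, so $x$ is a smooth $T$-periodic solution of~\eqref{riccati} if and only if $y$ is a smooth $T$-periodic solution of~\eqref{eq:firstnf}. Comparing the two time-$T$ Poincar\'e maps through the initial-condition bijection $y_0=a_2(0)x_0+A(0)$, and using the periodicity of $a_2$ and $A$ (so that the change of variables at $t=T$ is the same as at $t=0$), I expect to find that the two displacement maps differ only by the nonzero constant factor $a_2(0)$ and a reparametrization by an affine diffeomorphism. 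Consequently, zeros correspond with matching multiplicities; in particular, hyperbolic limit cycles are sent to hyperbolic ones and double limit cycles to double ones, with stability preserved.

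Once these two properties are in place, Theorem~\ref{main} applied to the transformed equation transfers verbatim to~\eqref{riccati} via the above bijection, giving the corollary. I do not anticipate any genuine difficulty beyond bookkeeping; the only point deserving careful treatment is the preservation of multiplicity of the fixed points of the Poincar\'e map, which ultimately reduces to the elementary fact that pre- and post-composition by affine diffeomorphisms, together with scaling by the nonzero constant $a_2(0)$, do not alter the order of vanishing of the displacement map.
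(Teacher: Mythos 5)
Your proposal is correct and follows essentially the same route as the paper: the paper's proof consists precisely of the affine change of variables $y=a_2(t)x+A(t)$, which transforms~\eqref{riccati} into $y'=y^2+\gamma(t)$ with $\gamma$ as in~\eqref{change2}, followed by an application of Theorem~\ref{main}. Your additional verification that this fibre-wise affine bijection preserves $T$-periodicity, hyperbolicity, and the multiplicity of limit cycles is a detail the paper leaves implicit, but it is the same argument.
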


One of the key ideas behind the proof of our results is to adapt certain properties of the (one-parameter) families of {\it rotated vector fields} to our one-dimensional periodic non-autonomous differential equation (see Remark~\ref{rema}). These particular one-parameter families were introduced by Duff in the 1950s~\cite{Duf} to study the movement and bifurcations of limit cycles in planar autonomous vector fields.  In short, the parameter in these families acts by monotonically rotating the vector field associated with the differential equation (see also~\cite{Per} for further details). This geometric property makes the evolution of the periodic solutions considerably more tractable.

Our results, and more concretely, the inequalities~\eqref{ineq}, are particularly useful to find upper and lower bounds of $\Delta_{\gamma}.$ In Section~\ref{sec:4} we collect several examples of application.  Some of them are simply illustrative examples while others improve previous results for particular Riccati equations. 

As we will see, the central idea of our approach in the examples is to employ a variation of the so-called {\it harmonic balance method}. Briefly, this method consists in approaching the periodic solutions of a differential equation by approximating them through truncated Fourier series, which are obtained by solving intricate nonlinear systems of equations. Although the method is often applied on a heuristic basis (\cite{Mic}), which typically works quite well, there also exist theoretical results establishing its validity under additional hypotheses (\cite{GG,Sto,Ura}). Our variation introduces an additional parameter into the differential equation, treating this parameter as an unknown as well, and then searches for periodic solutions with zero average. As we will see, these periodic solutions correspond to double limit cycles of the Riccati equation and provide information about the vanishing of the discriminant. Further details are given in Section~\ref{sec:methodology}.

In any case, the periodic functions obtained through our variation of the harmonic balance method, when applied to the Riccati equations under study, can be used as admissible choices for $p$ and $q$  in~\eqref{ineq}, thereby yielding bounds for $\Delta_\gamma.$ These bounds are always valid. Moreover, when the harmonic balance method provides a succession of trigonometric polynomials converging to the double limit cycle, the results become considerably stronger. In fact, by Proposition~\ref{prop:functionals}, such approximations provide sharp estimates for $\Delta_\gamma.$

Apart from the results established in Theorem~\ref{main} and Corollary~\ref{coro}, there are a few additional situations in which the number of periodic solutions can be exactly determined. Specifically:  
\begin{itemize}
	\item When an explicit particular solution of~\eqref{riccati} is known; see, for instance,~\cite{CimGasMan} for further details.  
	\item When the Riccati differential equation~\eqref{riccati} can be transformed into~\eqref{eq:firstnf}. This occurs in the case where there exists $u\in\R$ such that $a_2(t)u^2 + a_1(t)u + a_0(t) \ne0,$ for all $t\in\R,$  see Section~\ref{sec:mes}.
\end{itemize}

It is worth noting that there exists a configuration of limit cycles that does not occur in equation~\eqref{eq:firstnf} or in equation~\eqref{riccati} when $a_2$ does not vanish. This configuration corresponds to the case where the Riccati equation admits exactly one hyperbolic limit cycle. To conclude this introduction, we present an illustrative example of this situation. Consider
	\begin{equation}\label{eq:uve}
	x'= \big(v(t)-v'(t)\big) x^2-x,
	\end{equation}
	where $v(t)$ is any smooth real-valued $T$-periodic function. Some computations show that all solutions are given by $x=0$ and $x=\big(v(t)+k \, {\rm e}^{t}\big)^{-1}$ for $k\in\R$. In particular, the only possible periodic solutions are $x=0$, which is hyperbolic and attractive since $h(0)=-T$ (see \eqref{eq:hip}), and $x=1/v(t)$, which is periodic if and only if $v(t)\neq 0$ for all $t\in\R$. In this case, it is hyperbolic and repelling, as $h(1/v(t))=T$. Notice that if $v(t)$ vanishes, then the corresponding coefficient $a_2(t)=v(t)-v’(t)$ of the Riccati equation~\eqref{eq:uve} also vanishes. Indeed, this happens in two possible ways. If $v$ has a multiple zero at some $t_0$, then $v(t_0)=v'(t_0)=0$, which directly implies $a_2(t_0)=0$. Otherwise, all zeros of $v$ are simple. In this case, taking two consecutive zeros $0\leq t_0<t_1<T$ (which exist by the $T$-periodicity of $v$), one has $v’(t_0)v’(t_1)<0$, and consequently $a_2(t_0)a_2(t_1)<0$. By Bolzano’s Theorem, this implies that $a_2$ must vanish at some point. Therefore, every Riccati equation~\eqref{eq:uve} with vanishing $v$ provides an example with a unique hyperbolic limit cycle ($x=0$) and with $a_2(t)$ also vanishing.

\section{Preliminary results}

In this section, we present some preliminary results that will support the developments in the subsequent sections.

\begin{proposition}\label{continuity}
Let $\gamma(t)$ be a $T$-periodic function. Then, the functionals $\underline{\mu}[\gamma]$ and $\overline{\mu}[\gamma]$, defined in \eqref{def:func}, are continuous.
\end{proposition}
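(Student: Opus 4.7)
The plan is to equip both spaces $\mathcal{C}^1_T(\R)$ and $\mathcal{C}^1_{T,0}(\R)$ with the standard $\mathcal{C}^1$ topology (uniform convergence of the function and its derivative on $[0,T]$) and verify that each of the two functionals passes the usual sequential continuity test: if $p_n\to p$ in $\mathcal{C}^1$, then $\underline{\mu}[\gamma](p_n)\to \underline{\mu}[\gamma](p)$ and $\overline{\mu}[\gamma](p_n)\to \overline{\mu}[\gamma](p)$. In both cases the argument reduces to elementary uniform-convergence manipulations.

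For $\underline{\mu}[\gamma]$, I would introduce the continuous $T$-periodic function
\[
F_p(t):=p(t)^2+\widehat{\gamma}(t)-p'(t),
\]
so that $\underline{\mu}[\gamma](p)=-\max_{t\in[0,T]}F_p(t)$. The key observation is that $|\max F_{p_n}-\max F_p|\le \|F_{p_n}-F_p\|_\infty$, and
\[
\|F_{p_n}-F_p\|_\infty \le \|p_n^2-p^2\|_\infty + \|p_n'-p'\|_\infty \le \|p_n-p\|_\infty\bigl(\|p_n\|_\infty+\|p\|_\infty\bigr)+\|p_n'-p'\|_\infty,
\]
which tends to $0$ as $n\to\infty$ because the $\mathcal{C}^1$ convergence $p_n\to p$ implies $\|p_n\|_\infty$ is bounded, $\|p_n-p\|_\infty\to 0$ and $\|p_n'-p'\|_\infty\to 0$. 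Continuity of $\underline{\mu}[\gamma]$ follows immediately.

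For $\overline{\mu}[\gamma]$, define $P_p(t):=\int_0^t p(s)\,{\rm d}s$ and note that if $p\in\mathcal{C}^1_{T,0}(\R)$ then $P_p$ is $T$-periodic. Uniform convergence $p_n\to p$ on $[0,T]$ gives $P_{p_n}\to P_p$ uniformly, and since $\exp$ is uniformly continuous on bounded sets, $\ee^{-2P_{p_n}}\to \ee^{-2P_p}$ uniformly as well. Therefore both the denominator $\int_0^T \ee^{-2P_p(t)}\,{\rm d}t$ and the numerator $\int_0^T \ee^{-2P_p(t)}\bigl(p(t)^2-\widehat{\gamma}(t)\bigr)\,{\rm d}t$ depend continuously on $p$ (the latter combining the uniform convergence of $\ee^{-2P_{p_n}}$ with that of $p_n^2\to p^2$ used in the previous step). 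The denominator is uniformly bounded below by $T\cdot \ee^{-2\|P_p\|_\infty}>0$, so the quotient is continuous at $p$.

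No genuine obstacle is expected here; the only subtle point is to be explicit about the topology chosen on the function spaces and to record that $p\in\mathcal{C}^1_{T,0}(\R)$ ensures $P_p$ is $T$-periodic so that the weight $\ee^{-2P_p}$ behaves as required in the integrand. Both verifications are standard estimates built on the inequality $|\max f-\max g|\le \|f-g\|_\infty$ and the local Lipschitz behaviour of the exponential, so the argument is short and self-contained.
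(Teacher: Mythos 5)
Your argument is correct and follows essentially the same route as the paper: continuity of $\underline{\mu}[\gamma]$ via the $1$-Lipschitz estimate $|\max f-\max g|\le\|f-g\|_\infty$ for the max functional (which the paper derives from subadditivity, while you assert it directly), and continuity of $\overline{\mu}[\gamma]$ as a composition of continuous operations (which the paper states in one line and you spell out). The extra detail you supply for $\overline{\mu}[\gamma]$, including the positive lower bound on the denominator, is a welcome elaboration but not a different method.
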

\begin{proof}
The functional $\overline{\mu}[\gamma]$ is the composition of continuous functionals. Therefore it is itself continuous.

To show that $\underline{\mu}[\gamma]$ is continuous on $\mathcal{C}^1_T(\R)$, it is sufficient to show the continuity of the functional 
\[
M(p)=\max_{t\in[0,T]}\{p(t)\}, \, p\in \mathcal{C}^0_T(\R).
\]
First, notice that $M(p+q)\leq M(p)+M(q)$, for every $p,q\in \mathcal{C}^0_T(\R)$. Thus, $M(p)=M(q+p-q)\leq M(q)+M(p-q)$, which implies that $M(p)-M(q)\leq M(p-q)$. Interchanging the roles of $p$ and $q$ we also get $M(q)-M(p)\leq M(q-p)$. Hence, taking into account that $M(p)\leq M(q)$, provided that $p(t)\leq q(t)$ for $t\in \R$, we conclude that
\[
|M(p)-M(q)|\leq\max\{M(p-q),M(q-p)\}\leq M(|p-q|)\leq ||p-q||_{\mathcal{C}^0}.
\]
This provides the continuity of the functional $M$ and, consequently, the continuity of~$\underline{\mu}[\gamma]$.
\end{proof}

\begin{lemma}\label{persol}
A periodic solution of the differential equation \eqref{eq:firstnf} is hyperbolic if and only if its average is nonzero. Moreover, to be hyperbolic is equivalent to the existence of a second, distinct periodic solution of \eqref{eq:firstnf}, which is also hyperbolic.
\end{lemma}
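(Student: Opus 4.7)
The plan is to treat the two assertions separately, both building on the hyperbolicity criterion~\eqref{eq:hip}. Since $f(t,x)=x^2+\gamma(t)$ satisfies $\partial_x f = 2x$, the first assertion is immediate:
\[
h(\phi) = \int_0^T 2\phi(t)\, {\rm d} t = 2T\,\overline{\phi},
\]
so $\phi$ is hyperbolic if and only if $\overline{\phi}\neq 0$.

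For the second assertion, I would first establish the easier $(\Leftarrow)$ direction by showing that any two distinct $T$-periodic solutions of~\eqref{eq:firstnf} have opposite averages. Indeed, by uniqueness of initial value problems, two distinct periodic solutions $\phi\neq\psi$ cannot cross, so without loss of generality $\delta:=\psi-\phi > 0$ everywhere. The equation then gives $\delta' = \psi^2-\phi^2 = \delta(\phi+\psi)$, hence $(\log\delta)' = \phi+\psi$; integrating over $[0,T]$ and invoking the $T$-periodicity of $\log\delta$ yields $\overline{\phi}+\overline{\psi}=0$. In particular, $\overline{\phi}\neq 0$ if and only if $\overline{\psi}\neq 0$, so one periodic solution is hyperbolic if and only if the other is.

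For the $(\Rightarrow)$ direction I would produce a second periodic solution via the classical substitution $y = 1/(x-\phi(t))$, which converts~\eqref{eq:firstnf} into the linear equation $y' = -2\phi(t)y - 1$. Since $\overline{\phi}\neq 0$, the Floquet multiplier $\ee^{-2T\overline{\phi}}$ of the homogeneous part differs from $1$, so this linear equation admits a unique $T$-periodic solution $y^*$; variation of constants gives the explicit expression
\[
y^*(t) = \frac{-\int_t^{t+T}\ee^{-2\int_s^{t+T}\phi(u)\, {\rm d} u}\, {\rm d} s}{1-\ee^{-2T\overline{\phi}}}.
\]
The main technical obstacle is to verify that $y^*$ never vanishes, so that $\psi:=\phi+1/y^*$ defines a bona fide smooth periodic solution of~\eqref{eq:firstnf}. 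This is handled by sign analysis of the formula above: the numerator is strictly negative (its integrand is strictly positive) while the denominator is a nonzero constant with sign equal to $\operatorname{sgn}(\overline{\phi})$ and independent of $t$, so $y^*(t)$ has constant nonzero sign. The resulting $\psi$ is then a second, distinct $T$-periodic solution, and by the preceding paragraph $\overline{\psi}=-\overline{\phi}\neq 0$, so $\psi$ is automatically hyperbolic.
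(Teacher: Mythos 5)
Your proof is correct, and it overlaps with the paper's argument in the forward direction but takes a genuinely different route for the converse and for the hyperbolicity of the second solution. Like the paper, you get the first assertion from $h(\phi)=2T\overline{\phi}$ and produce the second periodic solution via the substitution $y=1/(x-\phi(t))$, reducing \eqref{eq:firstnf} to the linear equation $y'=-2\phi(t)y-1$ and checking that its unique $T$-periodic solution $y^*$ never vanishes; your sign analysis of the closed formula over $[t,t+T]$ is in fact cleaner than the paper's, since it handles both signs of $\overline{\phi}$ uniformly (the paper's displayed fixed point $\overline{y}_0$ is asserted to be negative, which really only holds when $\overline{\phi}>0$, though all that matters is that $y^*$ has constant sign). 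Where you diverge is the ``opposite averages'' lemma: two distinct periodic solutions cannot cross, so $\delta=\psi-\phi$ has constant sign, and integrating $(\log\delta)'=\phi+\psi$ over a period gives $\overline{\phi}+\overline{\psi}=0$. This elementary observation does double duty --- it makes explicit the hyperbolicity of $\psi$ that the paper dismisses with ``by similar computations,'' and it supplies the reverse implication of the lemma as literally stated. One caveat: the paper's converse is slightly stronger --- it shows a non-hyperbolic $\phi$ admits \emph{no} second periodic solution at all, hyperbolic or not, which is the form used in Corollary~\ref{conseq}(a). Your argument recovers this strengthening with one extra line: since $\psi>\phi$ everywhere, $\overline{\psi}>\overline{\phi}$, which combined with $\overline{\psi}=-\overline{\phi}$ forces $\overline{\psi}>0>\overline{\phi}$, so two distinct periodic solutions can never both have zero average. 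You should add that line if the lemma is to support the corollary as the paper uses it.
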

\begin{proof}
Let $g(t,x)=x^2+\gamma(t)$. The first part of the lemma follows directly from the fact that a periodic solution $\phi(t)$ of the differential equation \eqref{eq:firstnf} is hyperbolic if and only if
\[
0 \neq \int_0^T \dfrac{\partial g}{\partial x}(t, \phi(t)) \, {\rm d} t = 2 \int_0^T \phi(t) \, {\rm d} t = 2T\overline{\phi},
\]
where we have used~\eqref{eq:hip}.

Now, assume that $\phi(t)$ is a periodic solution of \eqref{eq:firstnf}. By considering the change of variables
\[
y = \dfrac{1}{x - \phi(t)},
\]
the differential equation \eqref{eq:firstnf} is transformed into the linear differential equation
\begin{equation}\label{eq:linear}
y' = -2\phi(t) y - 1,
\end{equation}
which has the following general solution
\[
y(t, y_0) = \ee^{-2\int_0^t \phi(s) \, {\rm d} s} \left( y_0 - \int_0^t \ee^{2\int_0^s \phi(\tau) \, {\rm d} \tau} \, {\rm d} s \right).
\]

If $\phi(t)$ is hyperbolic, then, by the first part of the statement, we have $\overline{\phi}\neq0$. Consequently, the map
\begin{equation}\label{map}
y_0 \mapsto y(T, y_0) = \ee^{-2T\overline{\phi}} \left( y_0 - \int_0^T \ee^{2\int_0^s \phi(\tau) \, {\rm d}\tau} \, {\rm d} s \right)
\end{equation}
admits a fixed point
\[
\overline{y}_0 = \dfrac{\int_0^T \ee^{-2\int_0^t \phi(s) \, {\rm d}  s} \, {\rm d} t}{1 - \ee^{2T\overline{\phi}}} < 0.
\]
Thus, $y(t, \overline{y}_0)$ is a periodic solution of \eqref{eq:linear} that satisfies $y(t, \overline{y}_0) < 0$ for all $t \in [0, T]$. Reversing the change of variables, we find that
\[
\psi(t) =  \phi(t) +\dfrac{1}{y(t, \overline{y}_0)}
\]
is another periodic solution of the differential equation \eqref{eq:firstnf}. By similar computations, it is not difficult to prove that it is also hyperbolic.

Conversely, if $\phi(t)$ is not hyperbolic, then its average vanishes. Consequently, the map \eqref{map} takes the form  
\[
y_0 \mapsto y(T, y_0) = y_0 - \int_0^T \ee^{-2\int_0^s \phi(\tau) \, {\rm d} \tau} \, {\rm d} s < y_0,
\]  
which has no fixed points. Therefore, the differential equation \eqref{eq:linear} does not admit periodic solutions. This, in turn, implies that the differential equation \eqref{eq:firstnf} has no periodic solutions other than $\phi(t)$.
\end{proof}

In the next corollary we state some direct consequences of Lemma~\ref{persol} and prove that the non-hyperbolic limit cycles of equation \eqref{eq:firstnf} must be double.

\begin{corollary}\label{conseq} Consider the differential equation \eqref{eq:firstnf}.
\begin{itemize}
\item[(a)] If it has two distinct periodic solutions, then they are both hyperbolic. Also, if it has a periodic solution with non-zero average, then it has two distinct hyperbolic periodic solutions.

\item[(b)] If it has a periodic solution with zero average, then it has a unique periodic solution which is non-hyperbolic and double. Also, it has a non-hyperbolic periodic solution if and only  if it has a unique periodic solution.
\end{itemize}
\end{corollary}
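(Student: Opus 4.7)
The plan is to treat (a) and (b) as essentially direct book-keeping consequences of Lemma~\ref{persol}, with the only non-trivial point being the claim that the unique non-hyperbolic periodic solution in (b) has multiplicity exactly~$2$. For (a), I would argue as follows. If equation~\eqref{eq:firstnf} has two distinct periodic solutions $\phi$ and $\psi$, suppose for contradiction that $\phi$ is non-hyperbolic. Then Lemma~\ref{persol} states that hyperbolicity of a periodic solution is equivalent to the existence of a second distinct periodic solution, so non-hyperbolicity of $\phi$ would imply that $\phi$ is the only periodic solution, contradicting the existence of $\psi$. Hence both are hyperbolic. For the second assertion in (a), a periodic solution with non-zero average is hyperbolic by the first part of Lemma~\ref{persol}, hence by its second part there exists a second distinct periodic solution, and by what we just proved both are hyperbolic.

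For (b), a periodic solution $\phi$ with zero average is non-hyperbolic by the first part of Lemma~\ref{persol}, so by its second part it must be the unique periodic solution. The same logic yields the ``if and only if'' at the end of (b): uniqueness forces non-hyperbolicity (since a hyperbolic solution comes with a distinct companion by Lemma~\ref{persol}), and the converse has just been established.

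The one step that requires a computation is showing that this unique non-hyperbolic solution is in fact a \emph{double} limit cycle. Here I would reuse the change of variables $y=1/(x-\phi(t))$ employed in the proof of Lemma~\ref{persol}. Since $\overline{\phi}=0$, the time-$T$ map of the linearized equation~\eqref{eq:linear} reduces to the translation $y_0 \mapsto y_0 - c$ with $c=\int_0^T \ee^{2\int_0^s \phi(\tau)\,{\rm d}\tau}\,{\rm d} s>0$. The fixed point $\phi(0)$ of the Poincar\'e map $\Pi$ of~\eqref{eq:firstnf} corresponds to $y_0=\infty$, so setting $u=x_0-\phi(0)=1/y_0$ one obtains
\begin{equation*}
\Pi(x_0)-x_0 \;=\; \frac{1}{y_0-c}-\frac{1}{y_0} \;=\; \frac{c\,u^{2}}{1-c\,u} \;=\; c\,u^{2}+O(u^{3}).
\end{equation*}
Since $c\neq 0$, the displacement map has a zero of multiplicity exactly~$2$ at $u=0$, so $\phi$ is a double (hence semi-stable) limit cycle, as desired.

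The main obstacle, if any, is choosing the right local coordinate near the fixed point so that the Poincar\'e map becomes computable; the linearizing substitution $y=1/(x-\phi(t))$ is precisely what makes this immediate, so once one realizes that the zero-average case turns the linear Poincar\'e map into a pure translation, the rest is a one-line Taylor expansion. Everything else is a direct rereading of Lemma~\ref{persol}.
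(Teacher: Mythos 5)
Your proof is correct, and for parts (a) and (b) it follows exactly the paper's route: both are read off directly from Lemma~\ref{persol}. Where you genuinely diverge is on the one non-trivial point, namely that the unique non-hyperbolic periodic solution is a \emph{double} limit cycle. The paper disposes of this by citing a general criterion from the literature: a non-hyperbolic $T$-periodic solution of $x'=f(t,x)$ along which $\frac{\partial^2 f}{\partial x^2}$ does not vanish is a double (hence semi-stable) limit cycle, and for \eqref{eq:firstnf} this second derivative is identically $2$. You instead compute the Poincar\'e map explicitly: the linearizing substitution $y=1/(x-\phi(t))$ turns the time-$T$ map of \eqref{eq:linear} into the translation $y_0\mapsto y_0-c$ with $c=\int_0^T \ee^{2\int_0^s\phi(\tau)\,{\rm d}\tau}\,{\rm d}s>0$ (the very computation already present in the proof of Lemma~\ref{persol}), and reading this back in the coordinate $u=x_0-\phi(0)=1/y_0$ gives the displacement $c\,u^2/(1-c\,u)$, which has a zero of multiplicity exactly two at $u=0$. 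Your argument is self-contained, avoids the external reference, and yields more: the exact M\"obius form of the return map and the sign of the quadratic coefficient, which identifies the semi-stability type. The paper's argument is shorter and applies verbatim to any scalar periodic equation with nonvanishing $f_{xx}$ along the cycle. The only point worth one extra sentence in a written version is the passage through the singular change of variables: the identity $\Pi(x_0)-x_0=c\,u^2/(1-c\,u)$ is derived for $u\neq0$, so you should remark that both sides are smooth in $u$ near $0$ and agree on a punctured neighbourhood, hence agree at $u=0$ together with all derivatives, which legitimizes the Taylor expansion at the fixed point itself.
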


\begin{proof}
	All properties are straightforward consequences of Lemma~\ref{persol}, except the character of the non-hyperbolic limit cycles. In~\cite{Llo} it is proved that if  $x=\phi(t)$ is a non-hyperbolic $T$-periodic solution of a $T$-periodic differential equation  $x'(t)=f(t,x)$ and
$\frac{\partial^2 f}{\partial x^2} (t,\phi(t))$
does not vanish, then this periodic solution corresponds to a double (and hence semi-stable) limit cycle.  For the Riccati equation~\eqref{eq:firstnf}, we have $\frac{\partial^2 f}{\partial x^2} (t,\phi(t))\equiv 2,$ so the result follows.	
\end{proof}

We remark that equivalent formulations of the above two results are already known in the literature (see, for instance,~\cite{Gas}). We present them here, together with their proofs, for the sake of completeness and because they will be used in what follows.

\begin{lemma}\label{lem:mu}
Assume that the differential equation \eqref{eq:firstnf} has a periodic solution $\phi(t)$. Then,
\begin{equation}\label{mu}
\overline{\gamma}=-\dfrac{1}{T}\int_0^T \phi(t)^2 \, {\rm d} t\leq0.
\end{equation}
Moreover, $\overline{\gamma}= 0$ if and only if $\gamma(t)\equiv 0$.
\end{lemma}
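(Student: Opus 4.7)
The plan is direct and short: exploit that $\phi$ satisfies the ODE and integrate over one period. Concretely, since $\phi$ is a $T$-periodic solution of~\eqref{eq:firstnf}, it satisfies $\phi'(t) = \phi(t)^2 + \gamma(t)$ pointwise. I would integrate this identity on $[0,T]$. By $T$-periodicity of $\phi$, the left-hand side gives $\phi(T)-\phi(0)=0$, so
\[
0 = \int_0^T \phi(t)^2\, {\rm d} t + \int_0^T \gamma(t)\, {\rm d} t = \int_0^T \phi(t)^2\, {\rm d} t + T\,\overline{\gamma},
\]
which rearranges to~\eqref{mu}. Since $\phi(t)^2\geq 0$, the integral is nonnegative, yielding $\overline{\gamma}\leq 0$.

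For the equivalence, one direction is immediate: if $\gamma\equiv 0$ then $\overline{\gamma}=0$. For the converse, assume $\overline{\gamma}=0$. Then by~\eqref{mu} we have $\int_0^T \phi(t)^2\, {\rm d} t=0$, and continuity of $\phi$ forces $\phi(t)\equiv 0$. Substituting back into the ODE gives $0=\phi'(t)=\phi(t)^2+\gamma(t)=\gamma(t)$, hence $\gamma\equiv 0$.

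No step looks like a genuine obstacle: the argument is just an application of the fundamental theorem of calculus together with the nonnegativity of $\phi^2$. The only thing worth being careful about is phrasing the equivalence cleanly, since the hypothesis (existence of a periodic solution) is needed for the nontrivial direction of the ``if and only if''.
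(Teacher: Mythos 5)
Your proposal is correct and follows essentially the same route as the paper: integrate the identity $\phi'=\phi^2+\gamma$ over one period, use periodicity to kill the left-hand side, and for the converse deduce $\phi\equiv 0$ from $\int_0^T\phi^2=0$ and substitute back. No differences worth noting.
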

\begin{proof}
Let $\phi(t)$ be a periodic solution of the differential equation \eqref{eq:firstnf}. Then, $\phi'(t) = \phi(t)^2 + \gamma(t)$. Integrating both sides of this equation  over $t\in[0,T]$, we obtain  
\begin{equation*}
	0=\phi(T)	-\phi(0)=\int_0^T \phi'(t)\, {\rm d} t=\int_0^T \phi(t)^2 \, {\rm d} t+\int_0^T \gamma(t)\,  {\rm d} t= \int_0^T \phi(t)^2 \, {\rm d} t+T\overline{\gamma}.
\end{equation*} 
Thus, the relationship \eqref{mu} follows.  

Now, assuming that $\overline{\gamma}= 0$, it follows from \eqref{mu} that $\phi = 0$. Substituting into the differential equation \eqref{eq:firstnf}, we conclude that $\gamma(t)\equiv 0$.  
\end{proof}

\begin{corollary}\label{nonexis}
Consider the differential equation \eqref{eq:firstnf}. If $\overline{\gamma}>0$, or  $\overline{\gamma}=0$ and $\gamma(t)\not\equiv0$, then it has no periodic solutions. 
\end{corollary}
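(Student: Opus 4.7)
The plan is to observe that Corollary~\ref{nonexis} is essentially the contrapositive of Lemma~\ref{lem:mu}, so it should follow by a short proof by contradiction, treating the two cases separately.

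First I would suppose, for contradiction, that \eqref{eq:firstnf} admits a periodic solution $\phi(t)$. Then Lemma~\ref{lem:mu} applies and yields the identity
\begin{equation*}
\overline{\gamma}=-\dfrac{1}{T}\int_0^T \phi(t)^2 \, {\rm d} t \leq 0,
\end{equation*}
together with the characterization that equality holds if and only if $\gamma(t)\equiv 0$.

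In the first case, $\overline{\gamma}>0$, the inequality above is immediately violated, giving the desired contradiction and hence the non-existence of periodic solutions. In the second case, $\overline{\gamma}=0$ with $\gamma(t)\not\equiv 0$, the equality clause of Lemma~\ref{lem:mu} forces $\gamma(t)\equiv 0$, which contradicts the standing hypothesis. Combining both cases yields the statement.

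There is no genuine obstacle here, since Lemma~\ref{lem:mu} has already done the analytical work (integrating $\phi'=\phi^2+\gamma$ over one period to extract the sign condition on $\overline\gamma$). The only thing to be careful about is presenting the two hypotheses as a clean dichotomy and invoking the correct half of Lemma~\ref{lem:mu} in each case.
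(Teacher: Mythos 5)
Your argument is correct and is exactly the route the paper intends: the corollary is stated without proof as the immediate contrapositive of Lemma~\ref{lem:mu}, using the inequality $\overline{\gamma}\leq 0$ for the first case and the equality clause $\overline{\gamma}=0 \Leftrightarrow \gamma\equiv 0$ (valid under the assumption that a periodic solution exists) for the second. Nothing is missing.
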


\section{Proof of the main result}\label{sec:proof}

In order to fully describe the periodic dynamics of the differential equation \eqref{eq:firstnf}, we decompose the function $\gamma(t)$ into the sum $\gamma(t) = \widehat{\gamma}(t) + \overline{\gamma},$ where $\overline{\gamma}$ and $\widehat{\gamma}(t)$ are given by \eqref{average}. We, then, consider the differential equation of type \eqref{eq:firstnf} embedded in the following one-parameter family of differential equations:
\begin{equation}\label{eq:nf}
x' = g(t, x, \mu) := x^2 + \widehat{\gamma}(t) + \mu,\,\, \mu\in\R.
\end{equation}
Notice that when $\mu = \overline{\gamma}$, the differential equation \eqref{eq:nf} becomes \eqref{eq:firstnf}. 

For the differential equation \eqref{eq:nf}, we are able to provide a complete description of the bifurcation diagram as $\mu$ varies.

\begin{theorem}\label{mainaux}
Let $\gamma(t)$ be a $T$-periodic function. Then, there exists a unique $\mu^*\leq0$, satisfying \[
\underline{\mu}[\gamma](p)\leq\mu^*\leq\overline{\mu}[\gamma](q)\leq0
\]
for every $p\in \mathcal{C}^1_{T}(\R)$ and $q\in \mathcal{C}^1_{T,0}(\R)$, for which: if $\mu<\mu^*$ the differential equation \eqref{eq:nf} has exactly two limit cycles, which are hyperbolic; if $\mu=\mu^*$ it has a unique limit cycle $\phi^*(t)$, which is semi-stable and double; and if $\mu>\mu^*$ it has no limit cycles. Moreover,
\begin{equation*}
\begin{aligned}
\mu^*&=\max \left\{\underline{\mu}[\gamma](p):\, p\in  \mathcal{C}^1_{T}(\R)\right\}=\underline{\mu}[\gamma](\phi^*)\\
&=\min \left\{\overline{\mu}[\gamma](p): p\in  \mathcal{C}^1_{T,0}(\R)\right\}=\overline{\mu}[\gamma](\phi^*),
\end{aligned}
\end{equation*}
and $\mu^*=0$ if and only if $\widehat \gamma(t)\equiv 0$, and in this case $\phi^*(t)\equiv 0$.
\end{theorem}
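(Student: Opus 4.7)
The plan is to establish the threshold value $\mu^*$ abstractly first, exploiting that \eqref{eq:nf} is a one-parameter family of ``rotated'' periodic vector fields in the sense of Duff, and only afterwards to identify $\mu^*$ with the extremal values of the functionals $\underline{\mu}[\gamma]$ and $\overline{\mu}[\gamma]$ via sub/super-solution arguments together with a weighted variational identity.

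First I would exploit the monotonicity $\partial g/\partial\mu\equiv 1>0$: by the comparison principle for scalar ODEs, the solution $x(t,x_0,\mu)$ is strictly increasing in $\mu$ for every $t>0$, and hence so is the Poincar\'e time-$T$ map $P_\mu$. Define $\mathcal{M}:=\{\mu\in\R:\eqref{eq:nf}\text{ admits a periodic solution}\}$. Corollary~\ref{nonexis} applied to $\widehat{\gamma}+\mu$ (whose average is $\mu$) gives $\mathcal{M}\subseteq(-\infty,0]$, and $p\equiv 0$ serves as a super-solution for any sufficiently negative $\mu$, so $\mathcal{M}\ne\emptyset$. For the interval property, if $\phi$ is a periodic solution at $\mu_0\in\mathcal{M}$, then for every $\mu<\mu_0$ the function $\phi$ is a strict super-solution of \eqref{eq:nf}; solutions starting at $x_0\leq\phi(0)$ cannot cross $\phi$, so they remain defined on $[0,T]$, and the Poincar\'e map $P_\mu:(-\infty,\phi(0)]\to(-\infty,\phi(0)]$ is continuous and monotone. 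Elementary comparison with $z'=z^2\pm C$ shows that $\lim_{x_0\to-\infty}P_\mu(x_0)$ is finite, so $P_\mu(x_0)-x_0$ changes sign and an intermediate value argument produces a fixed point, i.e.\ a periodic solution at $\mu$. Setting $\mu^*:=\sup\mathcal{M}\leq 0$, a compactness argument based on uniform $\mathcal{C}^1$ bounds on limit cycles as $\mu\uparrow\mu^*$ places $\mu^*\in\mathcal{M}$. At $\mu=\mu^*$ the periodic solution must have zero average, for otherwise Corollary~\ref{conseq}(a) would yield two hyperbolic cycles that persist under small increases of $\mu$ by the implicit function theorem, contradicting $\mu^*=\sup\mathcal{M}$; Corollary~\ref{conseq}(b) then makes this cycle unique, double and semi-stable, call it $\phi^*$. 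The trichotomy in Theorem~\ref{mainaux} is now immediate from Corollary~\ref{conseq} combined with the monotonicity in $\mu$.

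To prove the bounds, fix $p\in\mathcal{C}^1_T(\R)$ and set $\mu_p:=\underline{\mu}[\gamma](p)=-\max_t\{p^2+\widehat{\gamma}-p'\}$. Then $p'\geq p^2+\widehat{\gamma}+\mu_p$, so $p$ is a super-solution at $\mu=\mu_p$ and the argument above forces $\mu_p\in\mathcal{M}$, i.e.\ $\underline{\mu}[\gamma](p)\leq\mu^*$. For the upper bound, given $q\in\mathcal{C}^1_{T,0}(\R)$, introduce the weight $E_q(t):=\ee^{-2\int_0^t q(s)\, {\rm d} s}$, which is $T$-periodic since $\overline{q}=0$, and let $u:=\phi^*-q$. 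Expanding $(E_q u)'=E_q u'-2qE_q u$ using $\phi^{*\prime}=\phi^{*2}+\widehat{\gamma}+\mu^*$, integrating over $[0,T]$ (the total derivative vanishes by periodicity), and applying $\int_0^T E_q q'\, {\rm d} t=2\int_0^T E_q q^2\, {\rm d} t$ (integration by parts via $E_q'=-2qE_q$), all $q$-cross terms cancel and I arrive at the identity
\[
\overline{\mu}[\gamma](q)-\mu^*=\dfrac{\int_0^T E_q(t)\,u(t)^2\, {\rm d} t}{\int_0^T E_q(t)\, {\rm d} t}\geq 0,
\]
with equality if and only if $u\equiv 0$, i.e.\ $q=\phi^*$. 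Hence $\mu^*\leq\overline{\mu}[\gamma](q)$, with equality precisely at $q=\phi^*$; substituting $p=\phi^*$ into $\underline{\mu}$ uses $p^2+\widehat{\gamma}-p'\equiv-\mu^*$ to give $\underline{\mu}[\gamma](\phi^*)=\mu^*$ as well. Finally, $\widehat{\gamma}\equiv 0$ gives $\phi^*\equiv 0$ and $\mu^*=0$; conversely, if $\mu^*=0$ then Lemma~\ref{lem:mu} applied with $\gamma=\widehat{\gamma}$ forces $\phi^*\equiv 0$, and plugging back into \eqref{eq:nf} yields $\widehat{\gamma}\equiv 0$.

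The main obstacle is the closedness of $\mathcal{M}$ at $\mu^*$: one must show that the two hyperbolic limit cycles existing for $\mu<\mu^*$ remain uniformly bounded as $\mu\uparrow\mu^*$ and coalesce into a genuine periodic solution, excluding the possibility that one of them escapes to $\pm\infty$. The variational identity is clean once the correct weight $E_q$ is spotted, namely the integrating factor of the linearized equation $y'=-2qy-1$ from Lemma~\ref{persol} about the test profile $q$.
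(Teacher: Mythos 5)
Your argument is correct in substance and shares the paper's backbone (sub/super-solution comparison, Corollaries~\ref{conseq} and~\ref{nonexis}, and the lower bound obtained by observing that $p$ is a super-solution at $\mu=\underline{\mu}[\gamma](p)$), but it departs from the paper in two genuine ways. First, you organize the existence of $\mu^*$ as $\sup\mathcal{M}$ of the interval $\mathcal{M}$ of admissible parameters plus an Arzel\`a--Ascoli argument at the endpoint, whereas the paper constructs the two explicit families $\phi^{\pm}(\cdot,\mu)$ for $\mu$ very negative and shows that the initial conditions $x^-(\mu)$ and $x^+(\mu)$ are monotone in $\mu$, which yields both the uniform bounds you flag as ``the main obstacle'' and the coalescence at $\mu^*$ in one stroke; if you adopt the sandwich $\phi^-(\cdot,\mu_0)<\phi^{\pm}(\cdot,\mu)<\phi^+(\cdot,\mu_0)$ for $\mu_0<\mu<\mu^*$ your compactness step closes immediately. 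Relatedly, your interval-property argument only extracts the fixed point of $P_\mu$ on $(-\infty,\phi(0)]$; to get \emph{exactly two hyperbolic} cycles for $\mu<\mu^*$ you also need the unstable one above the separating curve (the paper gets it from the negatively invariant region $\mathcal{B}$), or else you must explicitly exclude a double cycle at some $\mu_1<\mu^*$ (which follows since the displacement map increases with $\mu$ and a double cycle is a minimum of it, so it would disappear for $\mu$ slightly above $\mu_1$, contradicting $[\mu_1,\mu^*]\subseteq\mathcal{M}$). Second, and most interestingly, your proof of $\overline{\mu}[\gamma](q)\geq\mu^*$ is different from the paper's: you derive the weighted identity
\[
\overline{\mu}[\gamma](q)-\mu^*=\left(\int_0^T E_q(t)\,{\rm d} t\right)^{-1}\int_0^T E_q(t)\big(\phi^*(t)-q(t)\big)^2\,{\rm d} t\geq 0,
\qquad E_q(t)=\ee^{-2\int_0^t q(s)\,{\rm d} s},
\]
which checks out (integrating $(E_q u)'$ over a period with $u=\phi^*-q$ and using $\int_0^T E_q q'=2\int_0^T E_q q^2$), and which delivers the inequality, the equality case $q=\phi^*$, and $\overline{\mu}[\gamma](\phi^*)=\mu^*$ all at once. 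The paper instead shows that the auxiliary linear equation $y'=2q(t)y+\widehat{\gamma}(t)+\mu-q(t)^2$ has a center precisely at $\mu=\overline{\mu}[\gamma](q)$ and that its orbits are sub-solutions of \eqref{eq:nf}, concluding that at most one periodic solution can exist there; that route does not presuppose $\mu^*\in\mathcal{M}$, while yours does, but by that stage both proofs have already established it, so the two are equally valid --- and your identity is arguably the cleaner and more self-contained of the two.
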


\begin{proof}[Proofs of Proposition~\ref{prop:functionals} and Theorem~\ref{main} ]
Notice that both results follow directly from Theorem~\ref{mainaux} by setting $\gamma^* = \mu^*$ and $\mu = \overline{\gamma}$. 
\end{proof}

\begin{proof}[Proof of Corollary~\ref{coro}] The change of variables  
	\begin{equation*}
		y = a_2(t) x +A(t)=a_2(t) x + \frac{1}{2} \left(a_1(t) + \frac{a_2'(t)}{a_2(t)}\right)
	\end{equation*}
	transforms~\eqref{riccati} into \eqref{eq:firstnf}, that is $
	y' = y^2+ \gamma(t).$ By applying Theorem~\ref{main} to this simplified Riccati differential equation, the proof follows.
\end{proof}

\begin{proof}[Proof of Theorem~\ref{mainaux}]
To investigate the existence of periodic solutions of the differential equation \eqref{eq:nf}, we will repeatedly examine its behavior on specific curves. Let us first briefly outline the general reasoning. 

If one can find a $T$-periodic function $y(t)$ such that
\begin{equation}\label{ine1}
\langle \nabla(x - y(t)), (1, g(t, y(t), \mu)) \rangle \leq 0,\,\, \forall t\in[0,T],
\end{equation}
two possibilities arise: either the inner product vanishes identically, in which case $y(t)$ is a periodic solution of \eqref{eq:nf}, or it does not. In the latter case, since $g(t, x, \mu)$ is bounded in $t$ and the coefficient of $x^2$ is positive, there exist $x_1 < 0 < x_2$ such that $g(t, x_1, \mu) > 0$ and $g(t, x_2, \mu) > 0$ for all $t \in [0,T]$. Therefore, the regions
\[
\mathcal{A} = \{(t,x) : x_1 < x < y(t),\, t \in [0,T]\}\, \text{ and }\,
\mathcal{B}  = \{(t,x) : y(t) < x < x_2,\, t \in [0,T]\}
\]
are positively and negatively invariant under the solutions of \eqref{eq:nf}, respectively. This implies the existence of periodic solutions in both regions. Specifically, since \eqref{eq:nf} has at most two periodic solutions, we conclude that $\mathcal{A}$ contains a unique asymptotically stable periodic solution, while $\mathcal{B}$ contains a unique unstable periodic solution. From item (a) of Corollary~\ref{conseq}, both solutions are hyperbolic. In this case, we say that the curve $y(t)$ separates the periodic solutions.

On the other hand, if the reverse inequality holds in \eqref{ine1}, specifically, \begin{equation}\label{ine2}
	\langle \nabla(x - y(t)), (1, g(t, y(t), \mu)) \rangle \geq 0,\, \forall t\in[0,T],
\end{equation}
two possibilities arise again: either the inner product vanishes identically, in which case $y(t)$ is a periodic solution of the differential equation \eqref{eq:nf}, or it does not. In the latter case, we conclude that the solution of \eqref{eq:nf} with initial condition $y(0)$ is not periodic.

Of course, inequalities~\eqref{ine1} and~\eqref{ine2} do not cover all possible cases, as the product may change sign. However, in such situations, no definitive information can be inferred from $y(t)$, at least in principle.

We now proceed to the proof of the statement.  

First, from Lemma~\ref{lem:mu}, the differential equation \eqref{eq:nf} does not have periodic solutions for any $\mu>0$.

Now, let $\mu < \mu^- := -\max\{\widehat{\gamma}(t) : t \in [0,T]\} \leq 0$. Observe that $g(t,0,\mu) < 0$ for all $t \in [0,T]$. Therefore, following the reasoning outlined at the beginning of this proof, we conclude that the differential equation \eqref{eq:nf} must have two hyperbolic periodic solutions separated by the line $\{(t,0):t\in[0,T]\}$: one asymptotically stable, denoted by $\phi^-(t; \mu)$, and one unstable, denoted by $\phi^+(t; \mu)$.  

Next, we examine how the periodic solutions $\phi^-(t;\mu)$ and $\phi^+(t;\mu)$ evolve as the parameter $\mu$ varies. Let $\mu_1 < \mu_2$, for which the hyperbolic periodic solutions described above exist for $\mu=\mu_1$ and $\mu=\mu_2$. Observe that
\[
\langle \nabla(x - \phi^{\pm}(t,\mu_2)), (1, g(t, \phi^{\pm}(t,\mu_2), \mu_1)) \rangle = \mu_1 - \mu_2 < 0.
\]
The reasoning presented at the beginning of this proof implies that each one of the curves $\phi^-(t, \mu_2)$ and $\phi^+(t, \mu_2)$ separate the periodic solutions $\phi^-(t, \mu_1)$ and $\phi^+(t, \mu_1)$. In other words, the following inequality holds:
\[
\phi^-(t, \mu_1) < \phi^-(t, \mu_2) < \phi^+(t, \mu_2) < \phi^+(t, \mu_1)
\]
for all $t \in [0,T]$. Consequently, the initial condition of the asymptotically stable periodic solution, $x^-(\mu) := \phi^-(0, \mu)$, is strictly increasing with respect to $\mu$, while the initial condition of the unstable periodic solution, $x^+(\mu): = \phi^+(0, \mu)$, is strictly decreasing with respect to $\mu$. Therefore, there exists a $\mu^* \in (\mu^-, 0]$ at which the two hyperbolic periodic solutions coalesce into a double semi-stable periodic solution, which we denote by $\phi^*(t)$.  We claim that this critical parameter $\mu^*$ is unique. Assume, by contradiction, that there exists another parameter $\mu^\circ \in (\mu^*, 0]$ such that the differential equation \eqref{eq:nf} has a non-hyperbolic periodic solution $\phi^\circ(t)$. Since
\[
\langle \nabla(x - \phi^\circ(t)), (1, g(t, \phi^\circ(t), \mu^*)) \rangle = \mu^* - \mu^\circ < 0,
\]
the reasoning outlined at the beginning of this proof implies that, at $\mu = \mu^*,$ besides the double semi-stable periodic solution $\phi^*(t),$ there would exist two additional distinct periodic solutions, leading to a contradiction. Therefore, $\mu^*$ is the unique value of $\mu$ for which the differential equation has a non-hyperbolic periodic solution.

In summary, we have established the existence of a critical parameter $\mu^* \leq 0$, such that: if $\mu < \mu^*$, the differential equation \eqref{eq:nf} has exactly two hyperbolic periodic solutions; if $\mu = \mu^*$, the equation has a unique periodic solution, $\phi^*(t)$, which is semi-stable and double; and if $\mu > \mu^*$, no periodic solutions exist for the differential equation \eqref{eq:nf}.

Now, let $p \in \mathcal{C}^1_T(\mathbb{R})$. Considering the bifurcation established above, to show that $\underline{\mu}(p) \leq \mu^*$, it  is sufficient  to prove that the differential equation \eqref{eq:nf} has at least one periodic solution when $\mu = \underline{\mu}(p)$. Observe that
\[
\langle \nabla(x - p(t)), (1, g(t, p(t), \underline{\mu}(p))) \rangle = \underline{\mu}(p) + p(t)^2 + \widehat{\gamma}(t) - p'(t) \leq 0.
\]
Thus, either the inner product is identically zero, implying that $p(t)$ is a periodic solution of \eqref{eq:nf} for $\mu = \underline{\mu}(p)$, or $p(t)$ separates two hyperbolic solutions of the same differential equation for $\mu = \underline{\mu}(p),$ as outlined at the beginning of the proof. Moreover, let $\phi^*(t)$ be the semi-stable double periodic solution of \eqref{eq:nf} for $\mu = \mu^*$. Then,
\[
\underline{\mu}(\phi^*) = -\underset{t}{\max} \left\{ \phi^*(t)^2 + \widehat{\gamma}(t) - (\phi^*(t)^2 + \widehat{\gamma}(t) + \mu^*) \right\} = \mu^*.
\]
Therefore, $\mu^* = \max \left\{ \underline{\mu}(p) : p \in \mathcal{C}^1_T(\mathbb{R}) \right\}.$

Finally, let $q \in \mathcal{C}^1_{T,0}(\mathbb{R})$. Taking into account the bifurcation established above, to show that $\overline{\mu}(q) \geq \mu^*$, it suffices to prove that the differential equation \eqref{eq:nf} has at most one periodic solution for $\mu = \overline{\mu}(q)$. To this end, consider the linear differential equation  
\begin{equation}\label{linear}
y' = 2q(t) y + \widehat{\gamma}(t) + \mu - q(t)^2,
\end{equation}
which has the general solution  
\[
y(t, y_0) = \ee^{2\int_0^t q(s) \, {\rm d} s} \Bigg(y_0 + \mu \int_0^t \ee^{-2\int_0^s q(r) \, {\rm d} r} \, {\rm d} s + \int_0^t \ee^{-2\int_0^s q(r) \, {\rm d} r} (\widehat\gamma(s) - q(s)^2) \, {\rm d} s \Bigg).
\]
Thus, one can easily verify that for $\mu = \overline{\mu}(q)$, the differential equation \eqref{linear} exhibits a center, meaning that $y(T, y_0) = y_0$ for every $y_0 \in \mathbb{R}$. On the other hand, following the reasoning at the beginning of the proof, we compute  
\[
\langle \nabla(y - y(t, y_0)), (1, g(t, y(t, y_0), \mu)) \rangle = (q(t) - y(t, y_0))^2 \geq 0,
\]
for every $\mu \in \mathbb{R}$ and $y_0 \in \mathbb{R}$. In particular, for $\mu = \overline{\mu}(q)$, either the above expression does not vanish identically for every $y_0 \in \mathbb{R}$, implying that the differential equation \eqref{eq:nf} has no periodic solutions, or there exists some $y_0^*$ for which the expression vanishes identically. In the latter case, $y(t, y_0^*) = q(t)$ is a periodic solution of the differential equation \eqref{eq:nf} with zero average. Therefore, by item (b) of Corollary~\ref{conseq}, it is the unique periodic solution and is semi-stable and double.  Moreover, let $\phi^*$ be the double semi-stable periodic solution of \eqref{eq:nf} for $\mu = \mu^*$. Since $\phi^*$ has zero average, integration by parts gives  
\[
\int_0^T \ee^{-2\int_0^t \phi^*(s)\, {\rm d} s} {\phi^*}'(t) \, {\rm d} t = 2\int_0^T \ee^{-2\int_0^t \phi^*(s) \, {\rm d} s} \phi^*(t)^2 \, {\rm d} t.
\]
Thus, we obtain  
\[
\begin{aligned}
0 &= \int_0^T \ee^{-2\int_0^t \phi^*(s)\, {\rm d} s} \Big(2\phi^*(t)^2 - {\phi^*}'(t)\Big) \, {\rm d} t \\  
&= \int_0^T \ee^{-2\int_0^t \phi^*(s)\, {\rm d} s} \Big(\phi^*(t)^2 - \widehat{\gamma}(t) - \mu^*\Big) \, {\rm d} t \\  
&= \Big(\overline{\mu}(\phi^*) - \mu^*\Big) \int_0^T \ee^{-2\int_0^t \phi^*(s)\, {\rm d} s} \, {\rm d} t.
\end{aligned}
\]
Hence, it follows that $\overline{\mu}(\phi^*) = \mu^*$, and consequently,  
\[
\mu^* = \min \left\{\overline{\mu}(p) : p \in \mathcal{C}^1_{T,0}(\mathbb{R}) \right\}.
\]

To conclude the proof, we notice that Lemma~\ref{lem:mu} implies that $\mu^*=0$ if and only if $\widehat\gamma(t)\equiv 0$. In this case, $\phi^*(t)\equiv 0$.
\end{proof}

\begin{remark}\label{rema}
	As we have already explained in the introduction, several steps of the proof of Theorem~\ref{mainaux} could have been addressed by using the theory of rotated vector fields, see~\cite{Duf,Per}. To apply it, the Riccati differential equation~\eqref{eq:nf} should be written as the equivalent planar autonomous vector field, defined  on the cylinder $ \R\times\mathbb{S}^1,$ with $\mathbb{S}^1\simeq \R/[0,T],$ 
	\begin{align*}
		\frac{dx}{dt} &=P(x,y,\mu)=x^2+\widehat{\gamma}(y)+\mu,\\
		\frac{dy}{dt} &=Q(x,y,\mu)=1.
	\end{align*}
It has the associated vector field $X(x,y,\mu) = (P(x,y,\mu), Q(x,y,\mu))$. The key condition that qualifies it as a family of rotated vector fields (ensuring that $X(x,y,\mu)$ rotates monotonically as $\mu$ varies) is
	\[
	R(x,y,\mu):=Q(x,y,\mu)\frac{\partial P}{\partial \mu}(x,y,\mu)-P(x,y,\mu)\frac{\partial Q}{\partial \mu}(x,y,\mu)\ne0.
	\]
	This condition holds, because $R(x,y,\mu)\equiv1.$ In any case, we have chosen not to rely on this theory and instead provide a self-contained proof, which essentially employs reasoning similar to that used in the general case.
\end{remark}

\section{Estimating the discriminant: methodology and examples}\label{sec:4}

In this section, we illustrate the application of Theorem~\ref{main} and Corollary~\ref{coro} through several examples. We begin with a straightforward one, demonstrating that the computation of the discriminant $\Delta_{\gamma}$ for \eqref{eq:firstnf} is equivalent to determining $\mu^*$ for the one-parameter family \eqref{eq:nf}. Consider the equation
\begin{equation}\label{ex0}
x' = x^2 - 1 + \cos(t) + \dfrac{1}{2}\cos(2t).
\end{equation}
Referring to \eqref{eq:firstnf} and \eqref{average}, we find
\[
\gamma(t) = -1 + \cos(t) + \dfrac{1}{2}\cos(2t), \,\, \overline{\gamma} = -1, \text{ and } \widehat{\gamma}(t) = \cos(t) + \dfrac{1}{2}\cos(2t).
\]
As discussed at the beginning of Section~\ref{sec:proof}, we embed equation \eqref{ex0} into the one-parameter family of differential equations \eqref{eq:nf}, given by
\begin{equation}\label{ex0nf}
x' = g(t, x, \mu) := x^2+\cos(t) + \dfrac{1}{2}\cos(2t) + \mu.
\end{equation}
According to Theorem~\ref{mainaux}, there exists a unique $\mu^*$ for which $x' = g(t, x, \mu^*)$ admits a double semi-stable periodic solution, or equivalently, from Lemma~\ref{persol}, a periodic solution with zero average. Taking into account that $\phi^*(t) := \sin(t)$ is a periodic solution of $x' = g(t, x, -1/2)$, we conclude that $\mu^* = -1/2$. Thus, by Proposition~\ref{prop:functionals}, $\gamma^* = \mu^* = -1/2$, yielding $\Delta_{\gamma} = \gamma^* - \overline{\gamma} = 1/2> 0.$
Hence, Theorem~\ref{main} ensures that the differential equation \eqref{ex0} has exactly two hyperbolic periodic solutions.

Naturally, if $\phi^*(t)$ was not known, determining the value of $\mu^*$ would not be possible in this way. In the next section, we propose a method to estimate the value of $\mu^* $ in cases where $\phi^* (t)$ cannot be explicitly determined.

\subsection{Methodology}\label{sec:methodology}
Consider the one-parameter family of differential equations:
\[
x' = g(t, x, \mu) := x^2 + \alpha(t) + \mu,\,\, \mu\in\R,
\]
where $\alpha$ is a zero average $T$-periodic function. From Theorem~\ref{mainaux}, we know that
 $\underline{\mu}(p)\leq\mu^*\leq\overline{\mu}(q)\leq0$
for every $p\in \mathcal{C}^1_{T}(\R)$ and $q\in \mathcal{C}^1_{T,0}(\R)$. Furthermore, 
\begin{equation*}
\begin{aligned}
\mu^*&=\max \left\{\underline{\mu}(p):\, p\in  \mathcal{C}^1_{T}(\R)\right\}=\underline{\mu}(\phi^*)\\
&=\min \left\{\overline{\mu}(p): p\in  \mathcal{C}^1_{T,0}(\R)\right\}=\overline{\mu}(\phi^*).
\end{aligned}
\end{equation*}
 
Our methodology generates a sequence of functions $p_n \in \mathcal{C}^1_{T,0}(\R)$, which is expected to converge to $\phi^*$. This allows us to estimate $\mu^*$ as  
\begin{equation}\label{estimate}
\underline{\mu}(p_n) \leq \mu^* \leq \overline{\mu}(p_n), \quad \text{ for all } n \in \mathbb{N}.
\end{equation}  
Although convergence is not guaranteed, if the sequence does converge, the continuity of the functionals $\underline{\mu}$ and $\overline{\mu}$ ensures that the estimate \eqref{estimate} can be made arbitrarily accurate. If the sequence does not converge, the estimate remains valid, through it may fail to achieve the desired accuracy.  

Let us now describe the construction of $p_n(t)$. Denote by $\alpha_n(t)$ the Fourier series of $\alpha(t)$ truncated at order $n$. Let $\lambda = (a_1, \ldots, a_n, b_1, \ldots, b_n) \in \R^{2n}$ and define
\[
x_n(t,\lambda) := \sum_{k=1}^n a_k \cos(k t) + \sum_{k=1}^n b_k \sin(k t).
\]
Notice that  $\overline{x}_n=0.$ Consider the equation
\begin{equation}\label{truncated}
\dfrac{\partial x_n}{\partial t}(t,\lambda) - x_n(t,\lambda)^2 - \alpha_n(t) - \mu = 0, \,\, t \in \R.
\end{equation}
For this identity to hold, we need to solve a system with $2n+1$ equations in the variables $(\mu, \lambda) \in \R^{2n+1}$. If there exists $(\mu_n, \lambda_n) \in \R^{2n+1}$ such that \eqref{truncated} is satisfied, we define $p_n(t) = x_n(t, \lambda_n)$. Notice that, in this case, by integrating~\eqref{truncated} between $0$ and $2\pi$ we get that
$0-\pi ||\lambda_n||^2-0-2\pi \mu_n=0,$ and hence $\mu_n=||\lambda_n||^2/2.$ 

 As we anticipate in the introduction, the procedure described above is a variation of the heuristic method of harmonic balance, which does not guarantee the generation of a convergent sequence. In fact, there is no assurance that for each $n \in \mathbb{N}$, a pair $(\mu_n, \lambda_n) \in \R^{2n+1}$ will exist to define $p_n$. However, if the obtained sequence $(p_n)$ converges uniformly to a function $p^*$, then $\mu_n$ converges to $\mu^*$ and $p^*=\phi^*$. 
Indeed, the relationship $p_n'(t) - p_n(t)^2 - \alpha_n(t) - \mu_n = 0$, $t\in\R,$ implies that
\[
\begin{aligned}
\mu_n=&- \dfrac{1}{T}\int_0^{T} p_n(t)^2 \, {\rm d} t \text{ and }\\
 p_n(t)=&p_n(0)+\int_0^t \left( p_n(s)^2+\alpha_n(s)+\mu_n \right)\, {\rm d} s,\,\, t\in\R.
 \end{aligned}
\]
Thus, assuming $p_n\to p^*$ uniformly, the sequence $\mu_n$ converges to
\[
\tilde\mu:=-\dfrac{1}{T}\int_0^{T} p^*(t)^2\, {\rm d} t
\]
and
\[
\begin{aligned}
p^*(t)&=\lim p_n(t)\\
&=\lim \left(p_n(0)+\int_0^t \left( p_n(s)^2+\alpha_n(s)+\mu_n \right)\, {\rm d} s\right)\\
&=p^*(0)+\int_0^t \left(p^*(s)^2+\alpha(s)+\tilde \mu \right)\, {\rm d} s,
\end{aligned}
\]
implying that $p^*(t)$ is a $T$-periodic solution of $x'=g(t,x,\tilde\mu)$. Moreover, since $p_n$ has zero average for each $n \in \mathbb{N}$, the uniform convergence $p_n \to p^*$ implies that $p^*(t)$ also has zero average. Hence, by Lemma~\ref{persol} and Theorem~\ref{mainaux}, we conclude that $p^* = \phi^*$ and $\tilde \mu = \mu^*$.

In conclusion, the reasoning above justifies the procedure as a reasonable approach for obtaining a sequence of functions $p_n$ to analytically estimate $\mu^*$ as in~\eqref{estimate}, even though, as noted, it does not guarantee an estimate of $\mu^*$ with arbitrary accuracy.

When applying the methodology described above to example \eqref{ex0nf}, the procedure concludes in the first step, yielding $p_1(t) = \sin(t) = \phi^*(t)$ and $\mu^* = -1/2$.

\subsection{Example 1}
Consider the following more interesting example:
\begin{equation}\label{ex1}
x' = g(t, x, \mu) := x^2 + \widehat{\gamma}(t) + \mu, \text{ where } \widehat{\gamma}(t) := \dfrac{45\cos^2(t) - 29}{(3\cos(t) - 5)^2},
\end{equation}
is a periodic function with zero average. It can be verified that
\[
\phi^*(t) := \dfrac{6\sin(t)}{5 - 3\cos(t)}
\]
is a periodic solution of $x' = g(t, x, -1)$. Since $\phi^*(t)$ also has zero average, we conclude, similarly to the previous example, that $\mu^* = -1$.

Now, let us apply the methodology developed in the previous section to analyze the differential equation \eqref{ex1} using the Fourier series
\[
\widehat\gamma(t)=\frac{2}{3}\cos(t) + \frac{14}{9}\cos(2t)+ \frac{26}{27}\cos(3t)+ \frac{38}{81}\cos(4t)+ \frac{50}{243}\cos(5t) + \frac{62}{729}\cos(6t)+\cdots,
\]
to illustrate how our approach recovers this bifurcation value.

In the first step, using the Fourier truncation of order $1$, we solve the linear system 
$\{-a_1=0, 3 b_1 - 2=0\},$
which yields
\[
p_1(t) = \dfrac{2}{3} \sin(t).
\]
In the second step, the parameters of $p_2$ are determined solving 
\[
\{a_1+a_1 b_2 - a_2 b_1 =3b_1  - 3a_1 a_2 - 3b_1 b_2- 2= 2 a_2+a_1 b_1= 36 b_2-9 a_1^2 + 9 b_1^2  - 28=0\},
\]
from which we obtain 
\[
p_2(t) =2\omega\sin(t) -(\omega^2 - 7/9)\sin(2t) \approx 1.175 \sin(t) + 0.4326 \sin(2t),
\]
where $\omega$ is the unique real solution of $9\omega^3 + 2\omega - 3=0$, that is, $\omega\approx 0.5874987922$.

In the third step, determining the parameters of $p_3$ requires solving a polynomial system of degree greater than five but with rational coefficients, whose solutions cannot be expressed in terms of radicals. Nevertheless, these parameters can be computed numerically to arbitrary precision, and if necessary, the estimate of their errors can also be provided. Here, we give only a numerical approximation of the results with 4 significant digits,  although the computations were done with $20$ significant digits:
\[
p_3(t) \approx 1.297 \sin(t) + 0.4416 \sin(2t) + 0.1301 \sin(3t).
\]
By continuing the same procedure, we obtain approximations for the subsequent terms of the sequence:
\[
\begin{aligned}
p_4(t) \approx &1.326 \sin(t) + 0.4440 \sin(2t) + 0.1445 \sin(3t) + 0.04473 \sin(4t), \\
p_5(t) \approx &1.332 \sin(t) + 0.4444 \sin(2t) + 0.1475 \sin(3t) + 0.04853 \sin(4t)\\
& + 0.01512 \sin(5t),\\
p_6(t)\approx& 1.333 \sin (t)+0.4444 \sin (2 t)+0.1480 \sin (3 t)+0.04923 \sin (4 t)\\
&+0.01623 \sin (5 t)+0.005096 \sin (6 t).
\end{aligned}
\]
Now, using \eqref{estimate}, we derive the following upper and lower bounds for $\mu^*$, truncated to three digits:
\[
\begin{aligned}
 -3.333 \approx\underline{\mu}(p_1) &\leq \mu^* \leq \overline{\mu}(p_1)\approx  -0.4897, \\
-1.960 \approx\underline{\mu}(p_2) & \leq \mu^* \leq \overline{\mu}(p_2)\approx-0.9594, \\
-1.430\approx\underline{\mu}(p_3) & \leq \mu^* \leq \overline{\mu}(p_3)\approx-0.9953, \\
-1.174\approx\underline{\mu}(p_4) & \leq \mu^* \leq \overline{\mu}(p_4)\approx-0.9995, \\
 -1.067\approx\underline{\mu}(p_5) & \leq \mu^* \leq \overline{\mu}(p_5)\approx-1.000,\\
 -1.025\approx\underline{\mu}(p_6) & \leq \mu^* \leq \overline{\mu}(p_6)\approx-1.000.
\end{aligned}
\]

As discussed earlier, we know that $\mu^* = -1$, which confirms that the method is providing an accurate estimate of $\mu^*$.

\subsection{Example 2}
As the second example, consider the following one-parameter family of differential equations:
\[
x'=g(t,x,\mu):=x^2+\sin(t)+\mu.
\]
In this case, we do not know the value of $\mu^*$ for which $x'=g(t,x,\mu^*)$ has a zero average periodic solution. So, let us apply the above methodology to approximate the value of $\mu^*$. Note that here $\widehat\gamma$ is already written as its Fourier series.

In the first steps, as in the previous example, the parameters can be determined explicitly, obtaining:
\[
\begin{aligned}
p_1(t)&= -\cos(t),\\
p_2(t)&=\omega\cos(t)+\omega^2\sin(2t)/4 \approx 0.1797 \sin (2 t)-0.8477 \cos (t),
\end{aligned}
\]
where $\omega$ is the unique real solution of $\omega^3 + 4 \omega + 4=0$, that is $\omega\approx-0.8477075981.$

In the subsequent steps, again as in the previous example, determining the parameters of $p_n$, $n\geq 3$, requires solving polynomials of degree greater than five, which cannot be expressed in terms of radicals. However, these parameters can still be computed numerically with arbitrary precision because the coefficients are rational numbers. Here, we provide only approximations of the subsequent terms of the sequence. Although the computations were again done with $20$ digits here, fewer are shown:
\[
\begin{aligned}
p_3(t) \approx& -0.8538 \cos (t)+0.1625 \sin (2 t)+0.04624 \cos (3 t),\\
p_4(t) \approx&-0.8535 \cos (t)+0.1628 \sin (2 t)+0.04279 \cos (3 t)-0.01245 \sin (4 t), \\
p_5(t) \approx &-0.8535 \cos (t)+0.1628 \sin (2 t)+0.04281 \cos (3 t)-0.01172 \sin (4 t)\\
&-0.003395 \cos (5 t),\\
p_6(t)\approx&-0.8535 \cos (t)+ 0.1628 \sin (2 t)+0.04280 \cos (3 t)-0.01172 \sin (4 t)\\
&-0.003235 \cos (5 t)+0.0009309 \sin (6 t).
\end{aligned}
\]

Now, considering \eqref{estimate}, we derive the following upper and lower bounds for $\mu^*$:
\[
\begin{aligned}
-1=\underline{\mu}(p_1) & \leq \mu^* \leq \overline{\mu}(p_1)\approx  -0.3489, \\
-0.5368\approx\underline{\mu}(p_2) & \leq \mu^* \leq \overline{\mu}(p_2)\approx-0.3771, \\
 -0.4264\approx\underline{\mu}(p_3) & \leq \mu^* \leq \overline{\mu}(p_3)\approx-0.3784, \\
-0.3988\approx\underline{\mu}(p_4) &  \leq \mu^* \leq \overline{\mu}(p_4)\approx-0.3785, \\
-0.3853\approx\underline{\mu}(p_5) & \leq \mu^* \leq \overline{\mu}(p_5)\approx-0.3785,\\
-0.3801\approx\underline{\mu}(p_6) & \leq \mu^* \leq \overline{\mu}(p_6)\approx-0.3785.
\end{aligned}
\]
Thus, we conclude that $\mu^* \in [-0.3801, -0.3785]$. This estimate is obtained through several numerical steps. However, if desired, it can also be derived analytically by studying with much more effort all the systems of equations involved, and also the associated functionals~\eqref{def:func}.

\subsection{Example 3}
Consider the one-parameter family of differential equations
\begin{equation}\label{exfamily}
	u' = f(t,u;\eta) := (\sin(t) - 2)u^2 + \eta - \cos(t).
\end{equation}
The goal of this section is to apply our approach to determine the values of $\eta$ for which the equation above admits periodic solutions. The case $\eta = 3$ was studied in~\cite{Ni2018}, where it was shown that exactly two periodic solutions exist.

Since $a_2(t)=\sin(t) - 2\ne0,$ we can apply Corollary~\ref{coro}.   Its associated $\gamma$ given in~\eqref{change2} is 
\[
\gamma_{\eta}(t)= (\sin(t)-2)(\eta -\cos(t)) - \frac{3\cos(t)^2}{4(\sin(t)-2)^2} - \frac{\sin(t)}{2(\sin(t)-2)}
\]
and~\eqref{exfamily} is equivalent to 
\begin{equation}\label{ex2}
x' = x^2 + \gamma_\eta(t).
\end{equation}
Referring to~\eqref{average}, we find
\begin{equation}\label{gammaeta}
\overline{\gamma}_{\eta} = \frac{1}{4} - \frac{\sqrt{3}}{6} -2\eta\qquad\mbox{and}\qquad
\begin{aligned}
\widehat{\gamma}_{\eta}(t)=B(t)+\eta \sin(t),
\end{aligned}
\end{equation}
where
\[
\begin{aligned}
B(t)=&\dfrac{1}{24(\sin(t)-2)^2}\Big(6 (3 \sqrt{3}-7)+228 \cos (t)+(48-16 \sqrt{3}) \sin (t)\\
&-2 \sqrt{3} \cos (2 t)-150 \sin (2 t)-36 \cos (3 t)+3 \sin (4 t)\Big).
\end{aligned}
\]

Following the methodology developed in Section~\ref{sec:methodology}, we embed \eqref{ex2} into the one-parameter family of differential equations  
\begin{equation*}
x' = g(t, x, \eta, \mu) := x^2 + \widehat{\gamma}_{\eta}(t) + \mu,
\end{equation*}  
with the corresponding Fourier series of $\widehat{\gamma}_{\eta}(t)$:
\[
\begin{aligned}
&\frac{1}{6}(6\eta + 24 -13\sqrt{3})\sin(t) + 2\cos(t) - \frac{1}{2}\sin(2t)- \frac{1}{3}(75 - 43\sqrt{3})\cos(2t) \\ &- \frac{1}{6}(792 - 457\sqrt{3})\sin(3t)+\frac{1}{3}(1914 - 1105\sqrt{3})\cos(4t)+\cdots.
\end{aligned}
\]
For each $\eta \in \R$, Theorem~\ref{mainaux} ensures the existence of $\mu^*(\eta)$ such that the differential equation $x' = g(t, x, \eta, \mu^*(\eta))$ admits a zero average periodic solution $\phi_{\eta}^*(t)$.  

To determine the values of $\eta$ for which \eqref{ex2} admits periodic solutions, Theorem~\ref{main} requires analyzing the sign of the discriminant $\Delta_{\gamma_{\eta}}$, given by  
\[
\Delta(\eta) := \Delta_{\gamma_{\eta}} = \mu^*(\eta) - \overline{\gamma}_{\eta}.
\]  
To study the sign of $\Delta(\eta)$, we first establish some properties of the function $\mu^*(\eta)$ in the following result.  

\begin{lemma}\label{lem:smoothconcave}
Consider equation \eqref{exfamily}. Then, the function $\mu^*(\eta)$ is differentiable and concave down.
 \end{lemma}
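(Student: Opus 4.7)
The key input is that, by~\eqref{gammaeta}, the mean-centered part $\widehat{\gamma}_\eta(t) = B(t) + \eta\sin(t)$ depends affinely on $\eta$. The plan is to exploit this in two steps: use the variational characterization from Theorem~\ref{mainaux} to get concavity essentially for free, and then apply the implicit function theorem at the double limit cycle $\phi^*_\eta$ to get differentiability.

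For concavity, I would start from $\mu^*(\eta) = \min\{\overline{\mu}[\widehat{\gamma}_\eta](q):q\in\mathcal{C}^1_{T,0}(\R)\}$ given by Theorem~\ref{mainaux}. For each fixed $q$, unwinding~\eqref{def:func} shows that
\[
\overline{\mu}[\widehat{\gamma}_\eta](q) = \Lambda(q) - K(q)\,\eta,
\]
where $K(q)$ and $\Lambda(q)$ are the averages of $\sin(t)$ and $q(t)^2 - B(t)$ against the probability measure proportional to $e^{-2\int_0^t q(s)\,ds}\,dt$ on $[0,T]$. Thus $\eta\mapsto \overline{\mu}[\widehat{\gamma}_\eta](q)$ is affine for each admissible $q$, and $\mu^*(\eta)$, being a pointwise infimum of affine functions, is concave down.

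For differentiability, I would apply the implicit function theorem to the map $F:\R^3\to\R^2$ defined by
\[
F(x_0,\mu,\eta):=\left(x(T;x_0,\mu,\eta)-x_0,\;\int_0^T x(t;x_0,\mu,\eta)\,dt\right),
\]
where $x(t;x_0,\mu,\eta)$ is the solution of $x'=x^2+B(t)+\eta\sin(t)+\mu$ with $x(0)=x_0$. The zero set of $F$ parametrizes $T$-periodic solutions with zero average, i.e., double limit cycles, and $F$ vanishes at $(x_0^*(\eta),\mu^*(\eta),\eta)$ with $x_0^*(\eta):=\phi^*_\eta(0)$. Standard variational computations yield $\partial_{x_0}x(T)=e^{2\int_0^T\phi^*_\eta(s)\,ds}=1$ (using $\overline{\phi^*_\eta}=0$), so $\partial_{x_0}F_1=0$; combined with $\partial_\mu x(T)=\int_0^T e^{2\int_s^T\phi^*_\eta(\tau)\,d\tau}ds>0$ and $\partial_{x_0}\int_0^T x\,dt=\int_0^T e^{2\int_0^t\phi^*_\eta(s)\,ds}\,dt>0$, this makes $\det\partial_{(x_0,\mu)}F$ nonzero. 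The IFT then produces a smooth function $\mu^*(\eta)$.

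The main obstacle is the differentiability step: concavity is essentially automatic once the affine dependence of $\widehat{\gamma}_\eta$ on $\eta$ is noted, whereas differentiability requires the nondegeneracy of the IFT Jacobian. The crucial cancellation $\partial_{x_0}F_1=0$ is precisely the non-hyperbolicity of the double limit cycle, which by Lemma~\ref{persol} is equivalent to $\overline{\phi^*_\eta}=0$. In this sense, the IFT argument mirrors dynamically the zero-average structure that drives Theorem~\ref{mainaux}.
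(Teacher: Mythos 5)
Your proof is correct, and it splits into a part that coincides with the paper and a part that takes a genuinely different route. For concavity you argue exactly as the paper does: by~\eqref{gammaeta} the map $\eta\mapsto\overline{\mu}[\gamma_\eta](q)$ is affine for each fixed $q$, and $\mu^*(\eta)$, being the (attained) infimum of these affine functions by Theorem~\ref{mainaux}, is concave down; the paper phrases the same fact via the supporting line $\eta\mapsto L(\phi^*_{\eta_0})\eta+M(\phi^*_{\eta_0})$, which touches the graph at $\eta_0$ and lies above it everywhere. For differentiability the two arguments diverge: the paper reads it off from that same tangent-line picture, while you apply the implicit function theorem to $F(x_0,\mu,\eta)=\bigl(x(T;x_0,\mu,\eta)-x_0,\ \int_0^T x(t;x_0,\mu,\eta)\,{\rm d} t\bigr)$, whose zero set is, by Corollary~\ref{conseq}(b) together with the uniqueness of the critical parameter in Theorem~\ref{mainaux}, precisely the graph $\eta\mapsto(\phi^*_\eta(0),\mu^*(\eta))$. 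Your Jacobian computation is right: $\partial_{x_0}x(T)=\ee^{2T\overline{\phi^*_\eta}}=1$ kills the entry $\partial_{x_0}F_1$, and the entries $\partial_\mu x(T)$ and $\partial_{x_0}\int_0^T x\,{\rm d} t$ are strictly positive, so the determinant is nonzero and the local IFT branch is identified with $\mu^*$ by uniqueness. What your approach buys is a more robust (indeed $C^\infty$) differentiability statement: a concave function admitting \emph{some} supporting line at every point need not be differentiable unless that supporting line is unique, so the paper's one-line deduction of differentiability from tangency is more delicate than it appears, whereas your IFT argument sidesteps this entirely, at the cost of the variational computation along the double limit cycle.
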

\begin{proof}
First, notice that, for each $\eta\in \R$ and $p\in \mathcal{C}^1_{T,0}(\R)$,
\begin{equation*}
\overline{\mu}[\gamma_{\eta}](p) = L(p) \eta + M(p),
\end{equation*}
where:
\begin{equation}\label{LM}
\begin{aligned}
L(p)&= -\left(\int_0^T \ee^{-2\int_0^t p(s)\, {\rm d} s} \, {\rm d} t\right)^{-1} \int_0^T \ee^{-2\int_0^t p(s)\, {\rm d} s} \sin(t) \, {\rm d} t, \\
M(p)&= \left(\int_0^T \ee^{-2\int_0^t p(s)\, {\rm d} s} \, {\rm d} t\right)^{-1} \int_0^T \ee^{-2\int_0^t p(s)\, {\rm d} s} \big(p(t)^2 - B(t)\big) \, {\rm d} t.
\end{aligned}
\end{equation}
Now, given $\eta_0\in \R$, let $\phi_{\eta_0}^*(t)$ be the zero average periodic solution of the differential equation $x' = g(t, x, \eta_0, \mu^*(\eta_0))$, which has its existence guaranteed by Theorem~\ref{mainaux}. From the same result, we know that $\mu^*(\eta_0)=\overline{\mu}[\gamma_{\eta_0}](\phi_{\eta_0}^*)$ and
\begin{equation}\label{concavity}
\mu^*(\eta)\leq \overline{\mu}[\gamma_{\eta}](\phi_{\eta_0}^*)=L(\phi_{\eta_0}^*) \eta + M(\phi_{\eta_0}^*),\, \eta\in \R.
\end{equation}
The last equality follows from \eqref{def:func} and \eqref{gammaeta}. This means that, for each  $\eta_0\in \R$, the non-vertical straight line $ \eta\in\R\mapsto L(\phi_{\eta_0}^*) \eta + M(\phi_{\eta_0}^*)$ is tangent to $\mu^*(\eta)$ at $\eta=\eta_0$, implying that $\mu^*(\eta)$ is differentiable. Moreover, \eqref{concavity} implies that the graph of $\mu^*(\eta)$ lies below its tangent lines and, therefore, is concave down.
\end{proof}

We are now ready to prove the following result.

\begin{theorem}
Consider equation \eqref{exfamily}. Then, there exists a unique $\eta^*\in \big((3-2\sqrt{3})/24,1\big)$ for which $\Delta(\eta^*)=0$. Moreover, $\Delta(\eta)<0$ for $\eta<\eta^*$ and  $\Delta(\eta)>0$ for $\eta>\eta^*.$
\end{theorem}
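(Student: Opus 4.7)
The plan is to exploit the concavity of $\Delta(\eta)$, inherited from Lemma~\ref{lem:smoothconcave} since $\overline{\gamma}_\eta = \tfrac{1}{4} - \tfrac{\sqrt{3}}{6} - 2\eta$ is affine in $\eta$, together with a linear lower bound on $\Delta$ and sign checks at the endpoints of the interval.

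First, at the left endpoint $\eta_0 := (3-2\sqrt{3})/24$ one has $\overline{\gamma}_{\eta_0} = 0$, so $\Delta(\eta_0) = \mu^*(\eta_0)$. The Fourier expansion of $\widehat{\gamma}_\eta$ displayed before the theorem contains the $\eta$-independent term $2\cos(t)$, hence $\widehat{\gamma}_\eta \not\equiv 0$ for every $\eta$. Theorem~\ref{mainaux} then gives $\mu^*(\eta_0) < 0$, so $\Delta(\eta_0) < 0$.

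Next, choose $p \equiv 0$ in the lower bound of~\eqref{ineq}. For $\eta \geq 0$ this yields
\[
\Delta(\eta) \geq -\max_{t}\bigl(B(t) + \eta\sin(t)\bigr) - \overline{\gamma}_\eta \geq \eta - \max_{t} B(t) - \tfrac{1}{4} + \tfrac{\sqrt{3}}{6},
\]
so $\Delta(\eta) \to +\infty$ as $\eta \to +\infty$. Since $\Delta$ is differentiable and concave down, its tangent line at any $\eta_1$ lies above the graph, whence $\Delta'(\eta_1) \geq (\Delta(y) - \Delta(\eta_1))/(y - \eta_1)$ for every $y > \eta_1$; letting $y \to +\infty$ and using the linear lower bound gives $\Delta'(\eta_1) \geq 1$ for every $\eta_1 \in \R$. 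Hence $\Delta$ is strictly increasing and so has at most one zero; combining this with $\Delta(\eta_0) < 0$ and the intermediate value theorem yields a unique $\eta^* > \eta_0$ with $\Delta(\eta^*) = 0$ and the claimed sign pattern $\Delta(\eta) < 0$ for $\eta < \eta^*$, $\Delta(\eta) > 0$ for $\eta > \eta^*$.

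It remains to show $\eta^* < 1$, i.e.\ $\Delta(1) > 0$. Using $p \equiv 0$ once more in~\eqref{ineq} reduces this to the trigonometric inequality
\[
\max_{t \in [0,2\pi]}\bigl(B(t) + \sin(t)\bigr) < \tfrac{7}{4} + \tfrac{\sqrt{3}}{6}.
\]
Clearing the denominator $24(\sin(t) - 2)^2$ present in $B$ transforms this into a polynomial inequality in $s = \sin(t)$ and $c = \cos(t)$ subject to $s^2 + c^2 = 1$, verifiable by standard case analysis; alternatively, the first-order harmonic balance approximation $p_1(t) = a\cos(t) + b\sin(t)$ from Section~\ref{sec:methodology} provides a tighter choice of $p$ and a sharper lower bound on $\Delta(1)$. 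This last verification is the main technical obstacle of the proof, since the elementary bound $\max_t(B+\eta\sin) \leq \max_t B + \eta$ used in the uniqueness step is relatively crude and may need to be replaced by a careful $t$-dependent analysis near the critical points of $B+\sin$ (numerical inspection indicates that the maximum is attained near $t = 0$, with value $\approx 1.85$, comfortably below $7/4 + \sqrt{3}/6 \approx 2.04$).
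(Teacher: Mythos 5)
Your overall architecture is sound and, for the uniqueness part, genuinely different from the paper's. The paper argues that $\Delta$ is concave (hence has at most two zeros), checks $\Delta(\eta)<0$ for $\eta\leq(3-2\sqrt{3})/24$ and $\Delta(\eta)>0$ for $\eta\geq1$, and then uses concavity once more to exclude a second zero. You instead combine the concavity from Lemma~\ref{lem:smoothconcave} with the linear lower bound $\Delta(\eta)\geq\eta-C$ obtained by taking $p\equiv0$ in~\eqref{ineq}, deduce $\Delta'(\eta)\geq1$ for every $\eta$, and conclude that $\Delta$ is strictly increasing; this is correct, and it even yields a small bonus (a quantitative slope bound, hence $\eta^*\leq\eta_0+|\mu^*(\eta_0)|$). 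Your treatment of the left endpoint --- $\overline{\gamma}_{\eta_0}=0$ together with $\mu^*(\eta_0)<0$ because the $2\cos(t)$ harmonic makes $\widehat{\gamma}_\eta\not\equiv0$ --- is also valid and is in substance the paper's appeal to Corollary~\ref{nonexis}.

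The gap is at the right endpoint. You reduce $\Delta(1)>0$ to the inequality $\max_{t}\bigl(B(t)+\sin(t)\bigr)<7/4+\sqrt{3}/6$, but you support it only numerically ($\approx 1.85$ versus $\approx 2.04$) and defer the actual verification to an unexecuted ``standard case analysis'' of a polynomial system in $(\sin t,\cos t)$. As written, this step is not proved, and it is exactly the step that places $\eta^*$ inside the stated interval. The paper sidesteps the computation entirely by returning to the original equation~\eqref{exfamily}: for $\eta\geq1$ one has $f(t,0,\eta)=\eta-\cos(t)\geq0$, while $f(t,u,\eta)<0$ for $|u|$ large since $a_2(t)=\sin(t)-2<0$; the line $u=0$ therefore separates two invariant regions, each containing a periodic solution, both hyperbolic by Corollary~\ref{conseq}, so Theorem~\ref{main} forces $\Delta(\eta)>0$ for all $\eta\geq1$. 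You should either carry out your trigonometric verification in full (the margin is comfortable, so it will work) or replace that step with this qualitative invariant-region argument.
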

\begin{proof}
First, observe that since $\overline{\gamma}_{\eta}$ is linear in $\eta$ and $\mu^*(\eta)$ is concave by Proposition~\ref{lem:smoothconcave}, the function $\Delta(\eta)$ can have at most two zeros.

On one hand, returning to the differential equation \eqref{exfamily}, we observe that for $\eta \geq 1$, $f(t,0,\eta) \geq 0$ for all $t \in \mathbb{R}$. Moreover, since $f(t,u,\eta) < 0$ for sufficiently large $u$ and all $t \in \mathbb{R}$, it follows that both \eqref{exfamily} and \eqref{ex2} admit exactly two hyperbolic periodic solutions for $\eta \geq 1$.  On the other hand, by Corollary~\ref{nonexis}, the differential equation \eqref{ex2}, and consequently \eqref{exfamily}, has no periodic solutions for $\eta \leq (3-2\sqrt{3})/24$, as this condition guarantees that $\overline{\gamma} \geq 0$. Therefore, Theorem~\ref{main} implies that
\begin{equation}\label{Delta-behavior}
\begin{cases}
\Delta(\eta) > 0, \quad \eta \geq 1,\\
\Delta(\eta) < 0, \quad \eta \leq (3-2\sqrt{3})/24.
\end{cases}
\end{equation}

Hence, by the continuity of $\Delta(\eta)$, there exists $\eta^* \in \big((3-2\sqrt{3})/24,1\big)$ such that $\Delta(\eta^*)=0$. Moreover, if $\Delta(\eta)$ had another zero distinct from $\eta^*$, then \eqref{Delta-behavior} would imply at least three zeros, contradicting the earlier conclusion that $\Delta(\eta)$ has at most two. Thus, $\eta^*$ is unique, and taking \eqref{Delta-behavior} into account, we conclude that $\Delta(\eta) < 0$ for $\eta < \eta^*$ and $\Delta(\eta) > 0$ for $\eta > \eta^*$.
\end{proof}

In what follows, we estimate the function $\mu^*(\eta)$ and the value $\eta^*$ using the methodology developed in the previous section.  We start by giving an outline of the approach we are going to use. 

For given $\eta_0 \in \R$ and $n\in\mathbb{N}$, we compute $p_n^{\eta_0}$, which provide an estimate for $\mu^*(\eta)$ for any $\eta$, though the approximation is most accurate when $\eta = \eta_0$. Notice that  
\[
\overline{\mu}(p_n^{\eta_0}) = L_n^{\eta_0} \eta + M_n^{\eta_0},
\]
where  $L_n^{\eta_0}=:L(p_n^{\eta_0})$ and $M_n^{\eta_0}:=M(p_n^{\eta_0})$ (see \eqref{LM}).

Thus,
\begin{equation}\label{upper}
\mu^*(\eta) \leq L_n^{\eta_0} \eta + M_n^{\eta_0}, \quad \forall \eta \in \mathbb{R}.
\end{equation}
On the other hand, let  
\[
K_n^{\eta_0} = \underset{t\in[0,T]}{\min} \big\{-p_n^{\eta_0}(t)^2 - B(t) - \eta_0 \sin(t) + (p_n^{\eta_0})'(t) \big\}.
\]
Then,  
\[
\begin{aligned}
\underline{\mu}(p_n^{\eta_0}) &= -\underset{t\in[0,T]}{\max} \big\{ p_n^{\eta_0}(t)^2 + B(t) + \eta \sin(t) - (p_n^{\eta_0})'(t) \big\} \\
&= \underset{t\in[0,T]}{\min} \big\{-p_n^{\eta_0}(t)^2 - B(t) - \eta_0 \sin(t) + (p_n^{\eta_0})'(t) - (\eta - \eta_0) \sin(t) \big\} \\
&\geq K_n^{\eta_0} + \underset{t\in[0,T]}{\min} \big\{ -(\eta - \eta_0) \sin(t) \big\} \geq K_n^{\eta_0} - |\eta - \eta_0|.
\end{aligned}
\]
Therefore,
\begin{equation}\label{lower}
\mu^*(\eta) \geq K_n^{\eta_0} - |\eta - \eta_0|, \quad \forall \eta \in \mathbb{R}.
\end{equation}
Hence, for a given $n \in \mathbb{N}$, computing the constants $K_n^{\eta_0}$, $L_n^{\eta_0}$, and $M_n^{\eta_0}$ for a collection $\eta_0\in\{\eta_1, \ldots, \eta_\ell\} $, and combining \eqref{upper} and \eqref{lower}, we obtain two continuous piecewise linear curves that bound $\mu^*(\eta)$ as follows 
\begin{equation}\label{estimfamily}
\mu^*_{L}(\eta):=\max_i \{ K_n^{\eta_i} - |\eta - \eta_i| \} \leq \mu^*(\eta) \leq \min_i \{ L_n^{\eta_i} \eta + M_n^{\eta_i} \}=:\mu^*_{U}(\eta).
\end{equation}
Such an estimation is improved by increasing $n$ and $\ell.$

To illustrate the above approach, let $n=3$ and consider the following five equidistant points in $[(3-2\sqrt{3})/24,1]$:
\[
\{\eta_i:=(3-2\sqrt{3})/24+i(21 + 2\sqrt{3})/96:\, i=0,\ldots,4\}.
\]
Notice that $\eta_0=(3-2\sqrt{3})/24$ and $\eta_4=1$. Thus, after straightforward computations, we get
\[
\begin{aligned}
p_3^{\eta_0}\approx&1.352 \sin (t)-0.416 \sin (2 t)+0.189 \sin (3 t)\\
&+0.028 \cos (t)+0.189 \cos (2 t)-0.056 \cos (3 t),\\
p_3^{\eta_1}\approx&1.322 \sin (t)-0.400 \sin (2 t)+0.161 \sin (3 t)\\&-0.162 \cos (t)+0.288 \cos (2 t)-0.123 \cos (3 t),\\
p_3^{\eta_2}\approx&1.285 \sin (t)-0.365 \sin (2 t)+0.111 \sin (3 t)\\&-0.353 \cos (t)+0.380 \cos (2 t)-0.181 \cos (3 t),\\
p_3^{\eta_3}\approx&1.245 \sin (t)-0.312 \sin (2 t)+0.046 \sin (3 t)\\&-0.539\cos (t)+0.460\cos (2 t)-0.222 \cos (3 t),\\
p_3^{\eta_4}\approx&1.202 \sin (t)-0.247\sin (2 t)-0.026 \sin (3 t)\\&-0.718 \cos (t)+0.525 \cos (2 t)-0.244 \cos (3 t).
\end{aligned}
\]
With those functions, we compute $K_3^{\eta_i}$, $L_3^{\eta_i}$, and $M_3^{\eta_i}$ for $i=0,\ldots,4$, which provide the estimations  \eqref{estimfamily} (see Figure~\ref{fig1}). In addition, bounds for $\eta^*$ can be computed as follows. For each $i = 0, \ldots, 4$, let $\underline{\eta}_i$ be the solution of the equation $\gamma_\eta = L_n^{\eta_i} \eta + M_n^{\eta_i}$, and let $\overline{\gamma}_{\underline{\eta}_i}$ be the corresponding image in \eqref{gammaeta}. The pairs $(\underline{\eta}_i,\overline{\gamma}_{\underline{\eta}_i})$ for $i=0,\ldots,4$ are 
\[
(0.479,-0.997), (0.498,-1.035), (0.505, -1.048), (0.496,-1.031), (0.471,-0.981).
\]
Then, the lower bound $\underline{\eta}$ is defined as the value $\underline{\eta}_i$ for which $\overline{\gamma}_{\underline{\eta}_i}$ attains the minimum among the five considered. That is, $\underline{\eta}\approx0.505.$
Analogously, let $\overline{\eta}_i$ be the solution of the equation $\gamma_{\eta}=K_n^{\eta_i} - |\eta - \eta_i|$ and let $\overline{\gamma}_{\overline{\eta}_i}$ be the corresponding image in \eqref{gammaeta}. The pairs 
$(\overline{\eta}_i,\overline{\gamma}_{\overline{\eta}_i})$ for $i=0,\ldots,4$ are 
\[
(1.442,-2.922), (1.219,-2.477), (1.045,-2.128),(0.907,-1.852), (0.934,-1.907).
\]
Then, the upper bound $\overline{\eta}$ is defined as the value $\overline{\eta}_i$ for which $\overline{\gamma}_{\overline{\eta}_i}$ attains the maximum among the five considered. That is, $\overline{\eta}\approx0.907.$
These two values corresponds with the intersections of the blue line with the black and red ones in Figure~\ref{fig1}. So, we get the following estimation
\[
0.505\approx\underline{\eta}\leq\eta^*\leq \overline{\eta}\approx0.907.
\]

\begin{figure}[h]
	\begin{overpic}[width=10.8cm]{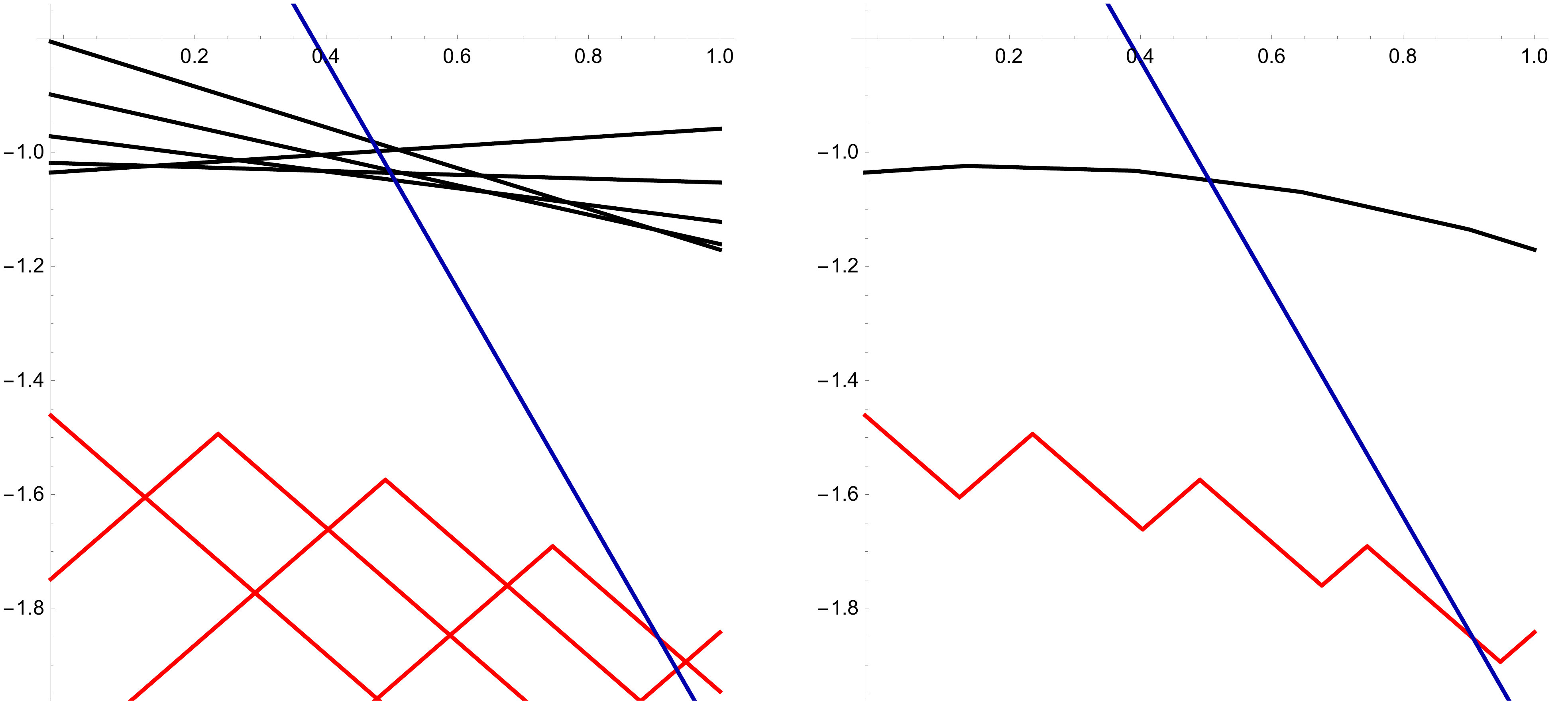}
		\put(33,38){$L_3^{\eta_i} \eta + M_3^{\eta_i}$}
		\put(8,18){$K_3^{\eta_i} - |\eta - \eta_i|$}
		\put(83,35){$\mu^*_{U}(\eta)$}
		\put(58,20){$\mu^*_{L}(\eta)$}
		\put(22,40){$\overline\gamma_{\eta}$}
		\put(74,40){$\overline\gamma_{\eta}$}
	\end{overpic}
	\caption{Left: Set of straight lines $L_3^{\eta_i} \eta + M_3^{\eta_i}$ and piecewise straight lines $K_3^{\eta_i} - |\eta - \eta_i|$, for $i=0,\ldots,4$.
		Right: Continuous functions $\mu^*_{L}(\eta)$ and $\mu^*_{U}(\eta)$.}
	\label{fig1}	
\end{figure}

Now, setting $n=8$ and taking $20$ points $\eta_i$ equally distributed in $[(3-2\sqrt{3})/24,1]$ (see Figure~\ref{fig2}), we obtain better approximations:
$0.507\approx\underline{\eta}\leq\eta^*\leq \overline{\eta}\approx0.527.$ 

\begin{figure}[h]
	\begin{overpic}[width=10.8cm]{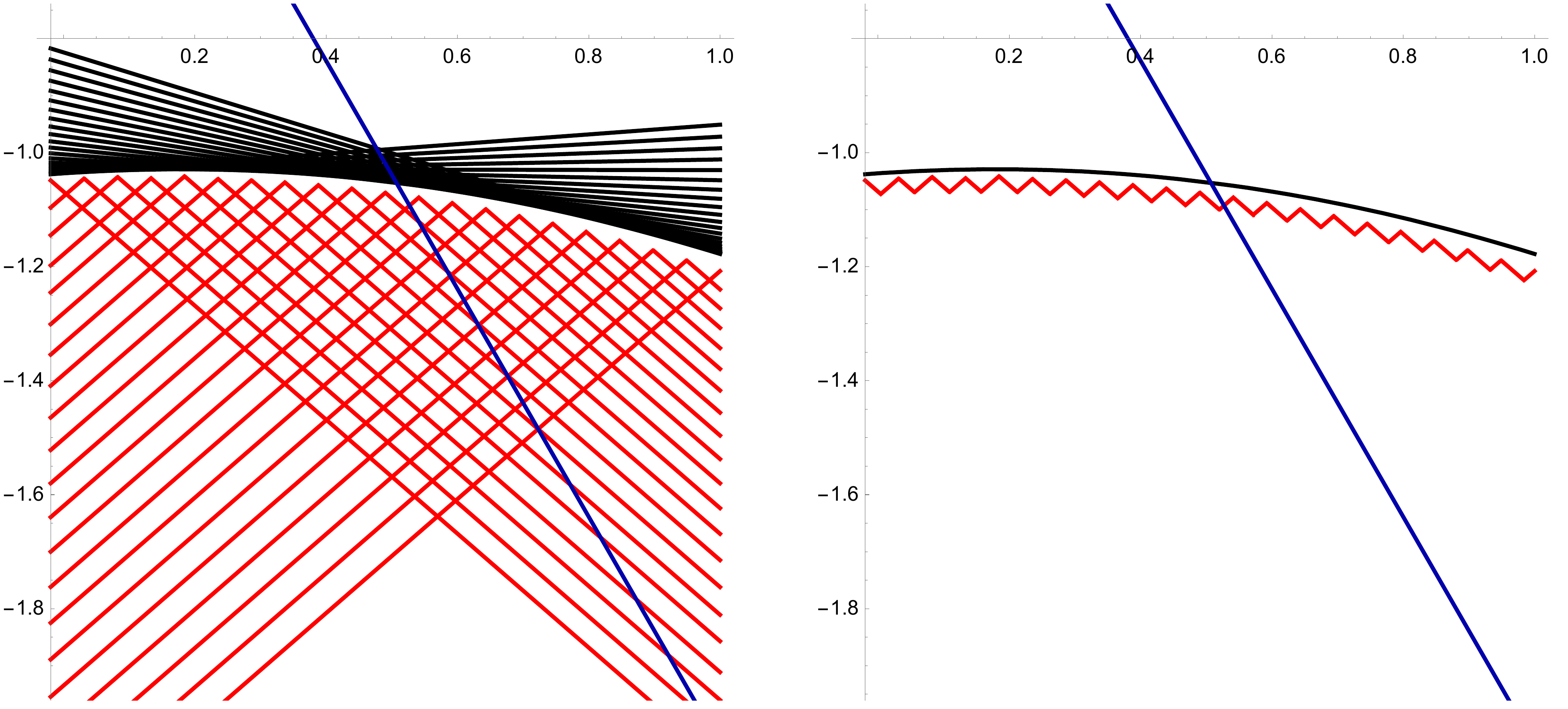}
	\put(33,38.5){$L_8^{\eta_i} \eta + M_8^{\eta_i}$}
	\put(18.3,2){$K_8^{\eta_i} - |\eta - \eta_i|$}
	\put(83,35){$\mu^*_{U}(\eta)$}
	\put(58,29){$\mu^*_{L}(\eta)$}
	\put(22,40){$\overline\gamma_{\eta}$}
	\put(74,40){$\overline\gamma_{\eta}$}
	\end{overpic}
	\caption{Left: Set of straight lines $L_8^{\eta_i} \eta + M_8^{\eta_i}$ and piecewise straight lines $K_8^{\eta_i} - |\eta - \eta_i|$, for $i=0,\ldots,20$.  
Right: Continuous functions $\mu^*_{L}(\eta)$ and $\mu^*_{U}(\eta)$.}
	\label{fig2}	
\end{figure}

Similarly, for $n=15$ with 30 points $\eta_i$ equally distributed in $[(3-2\sqrt{3})/24,1]$ (see Figure~\ref{fig3}), we get  
\[
0.507\approx\underline{\eta}\leq\eta^*\leq \overline{\eta}\approx0.514.
\]
Thus, we obtain a more accurate approximation of the bifurcation value corresponding to the occurrence of a double periodic solution in equation \eqref{exfamily}.

\begin{figure}[h]
	\begin{overpic}[width=10.8cm]{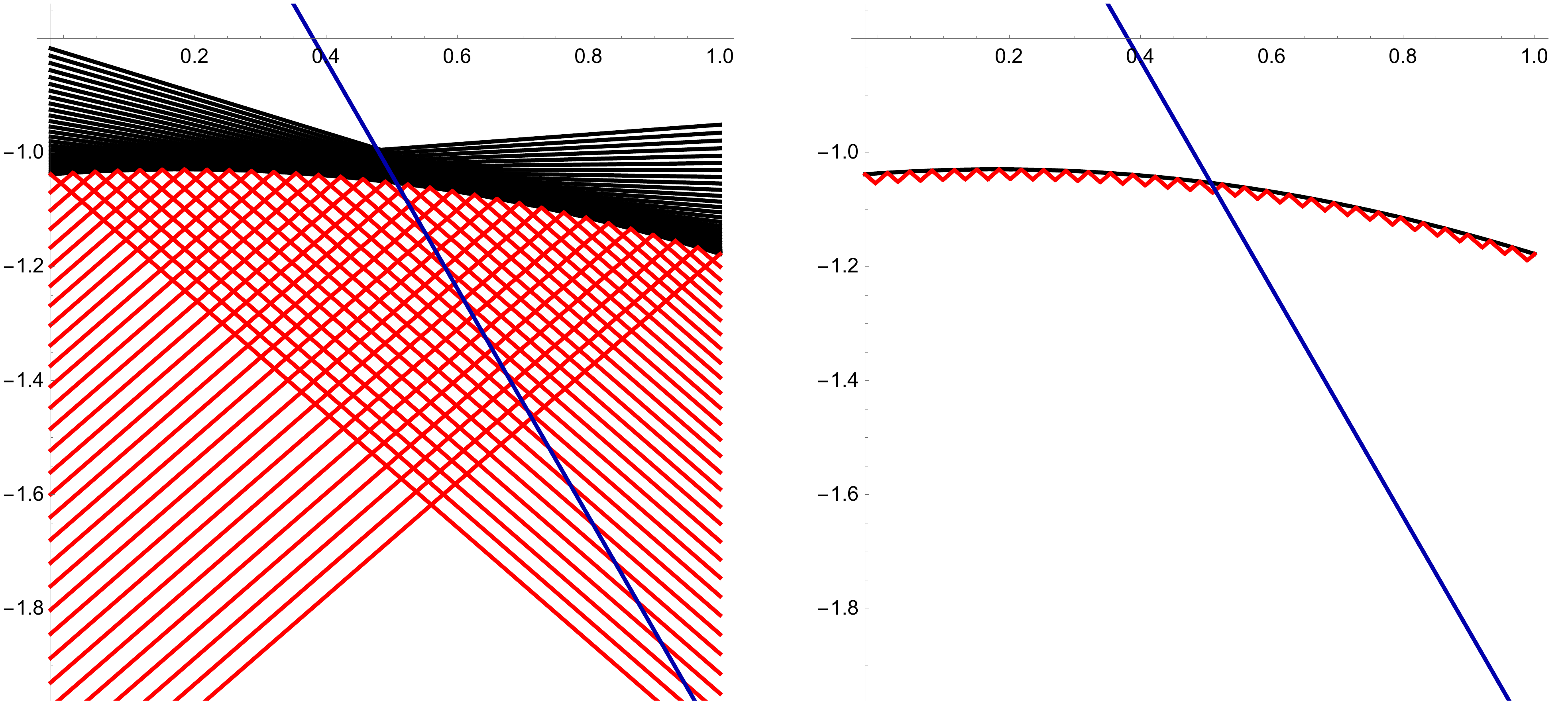}
	\put(33,38.5){$L_{15}^{\eta_i} \eta + M_{15}^{\eta_i}$}
	\put(17.5,2){$K_{15}^{\eta_i} - |\eta - \eta_i|$}
	\put(83,35){$\mu^*_{U}(\eta)$}
	\put(58,30){$\mu^*_{L}(\eta)$}
	\put(22,40){$\overline\gamma_{\eta}$}
	\put(74,40){$\overline\gamma_{\eta}$}
	\end{overpic}
	\caption{Left: Set of straight lines $L_{15}^{\eta_i} \eta + M_{15}^{\eta_i}$ and piecewise straight lines $K_{15}^{\eta_i} - |\eta - \eta_i|$, for $i=0,\ldots,30$.
Right: Continuous functions $\mu^*_{L}(\eta)$ and $\mu^*_{U}(\eta)$.}
	\label{fig3}	
\end{figure}

\section{Further comments}\label{sec:mes}
Besides the differential equation \eqref{riccati}, 
\begin{equation*}
	x' = f(t,x)= a_2(t)x^2 + a_1(t)x + a_0(t),
\end{equation*}
which satisfies $a_2(t)\neq0$ for every $t \in [0,T]$, other Riccati differential equations can also be transformed into the canonical form~\eqref{eq:firstnf}. Specifically, if there exists $u_0 \in \mathbb{R}$ such that $f(t, u_0) \neq 0$ for all $t \in [0,T]$, any periodic solution of the differential equation \eqref{riccati} cannot intersect the curve $u = u_0$. Hence, we propose a singular change of variables that maps this curve to infinity.

\begin{lemma}
Consider the differential equation \eqref{riccati}. Suppose there exists $u_0 \in \mathbb{R}$ such that $f(t, u_0) \neq 0$ for all $t \in [0,T]$. Then, the singular change of variables  
\begin{equation}\label{change1}
x =h(t,u):= \frac{m(t)}{u - u_0} + n(t),
\end{equation}
where  
\[
\begin{aligned}
m(t) &= -f(t, u_0), \\  
n(t) &= \frac{1}{2 f(t, u_0)} \left( \frac{\partial f}{\partial t} (t, u_0) - \frac{\partial f}{\partial u} (t, u_0) f(t, u_0) \right),
\end{aligned}
\]
transforms the differential equation \eqref{riccati} into \eqref{eq:firstnf}. Moreover, the solutions of~\eqref{eq:firstnf} that do not intersect the curve $x = n(t)$ correspond bijectively to the solutions of~\eqref{riccati} that do not intersect the curve $u=u_0$ via the inverse transformation
\begin{equation}\label{inversechange1}
u=u_0+\dfrac{m(t)}{x-n(t)}.
\end{equation}
\end{lemma}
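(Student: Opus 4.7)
The plan is a direct computation verifying that the stated change of variables transforms the Riccati equation \eqref{riccati} into an equation of the canonical form \eqref{eq:firstnf}. The crucial observation is that $f(t,u)$ is quadratic in $u$, so its Taylor expansion around $u_0$ terminates after two steps:
\[
f(t,u) = f(t,u_0) + \frac{\partial f}{\partial u}(t,u_0)(u-u_0) + a_2(t)(u-u_0)^2.
\]

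First, I would check that the change of variables \eqref{change1} is well-defined and invertible on the appropriate domain. Since $f(t,u_0)\neq 0$ on $[0,T]$ by hypothesis, $m(t)=-f(t,u_0)$ is a nowhere vanishing smooth $T$-periodic function, and $n(t)$ is similarly smooth and $T$-periodic. Solving $x-n(t)=m(t)/(u-u_0)$ algebraically yields \eqref{inversechange1}, so the map $(t,u)\mapsto(t,h(t,u))$ is a smooth diffeomorphism between $\{(t,u):u\neq u_0\}$ and $\{(t,x):x\neq n(t)\}$; the claimed bijection between solutions avoiding $u=u_0$ and solutions avoiding $x=n(t)$ is then immediate, since the transformation depends only on $t$.

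Second, I would differentiate $x=m(t)/(u-u_0)+n(t)$ with respect to $t$, substitute $u'=f(t,u)$ using the truncated Taylor expansion above, and then replace $u-u_0$ by $m(t)/(x-n(t))$. The identity $m=-f(t,u_0)$ causes the cancellation that turns the term $-(x-n)^2 f(t,u_0)/m$ into $+(x-n)^2$, producing after simplification
\[
x' = (x-n)^2 + \left(\frac{m'(t)}{m(t)} - \frac{\partial f}{\partial u}(t,u_0)\right)(x-n) - a_2(t)m(t) + n'(t).
\]

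Finally, expanding $x^2=(x-n)^2+2n(x-n)+n^2$ and comparing, the transformed equation has the form $x'=x^2+\gamma(t)$ if and only if the coefficient of $(x-n)$ equals $2n(t)$. Using that $m'(t)/m(t)=\frac{\partial f}{\partial t}(t,u_0)/f(t,u_0)$, this condition reproduces precisely the formula for $n(t)$ in the statement. The remaining $t$-dependent terms then give the explicit expression $\gamma(t)=n'(t)-n(t)^2-a_2(t)m(t)$, which is manifestly smooth and $T$-periodic. The main obstacle is simply careful algebraic bookkeeping in collecting powers of $(x-n(t))$; no deeper technique is required.
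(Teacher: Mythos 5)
Your proposal is correct: the direct computation with the exact (terminating) Taylor expansion of $f$ about $u_0$, the identification $v=u-u_0=m/(x-n)$, and the verification that $m'/m-\frac{\partial f}{\partial u}(t,u_0)=2n(t)$ all check out, yielding $\gamma=n'-n^2-a_2 m$. This is precisely the routine derivation the paper alludes to but omits (``can be derived directly, but due to its length\dots we omit it here''), so your approach coincides with the intended one.
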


The expression for $\gamma$ resulting from the transformation \eqref{change1} can be derived directly, but due to its length and lack of relevance for the main arguments, we omit it here.

Although the change of variables \eqref{change1} is singular, it preserves the complete information regarding the periodic solutions of equation \eqref{riccati}. Specifically, if $\psi(t)$ is a $T$-periodic solution of \eqref{riccati}, then $\psi(t) \neq u_0$ for all $t \in [0,T]$, and thus $\phi(t) = h(t, \psi(t))$ defines a $T$-periodic solution of the transformed equation \eqref{eq:firstnf}. However, the converse does not necessarily hold: not all periodic solutions of \eqref{eq:firstnf} correspond to periodic solutions of \eqref{riccati}. In fact, those trajectories that intersect the curve $x = n(t)$ are not preserved under the inverse transformation \eqref{inversechange1}. Therefore, when studying the periodic solutions of \eqref{riccati} through the transformed system \eqref{eq:firstnf}, it is crucial to account for such intersections, as they may lead to spurious periodic solutions in the transformed setting.

\section*{Acknowledgments}
This work has been realized thanks to the Brazilian S\~{a}o Paulo Research Foundation (FAPESP) grant 2024/15612-6 and Conselho Nacional de Desenvolvimento Cient\'{i}fico e Tecnol\'{o}gico (CNPq) grant 301878/2025-0; the Catalan AGAUR Agency grant 2021 SGR 00113; and the Spanish Ministerio de Ci\'encia, Innovaci\'on y Universidades, Agencia Estatal de Investigaci\'on grants PID2022-136613NB-I00 and CEX2020-001084-M.

\bibliographystyle{abbrv}
\bibliography{biblio}
\end{document}